\documentclass[11pt,a4paper,reqno]{amsart}
\usepackage[T1]{fontenc}
\usepackage{lmodern,amsmath,amssymb,amsthm,mathtools,mathrsfs,microtype}
\usepackage{xcolor}
\usepackage[margin=1.12in]{geometry}
\usepackage[colorlinks=true,linkcolor=blue!50!black,citecolor=blue!50!black,urlcolor=blue!50!black]{hyperref}
\hypersetup{
	pdftitle={Small-time asymptotics of heat kernels of for one-dimensional diffusions in a random environment},
	pdfauthor={Yiduo Wang, Saisai Yang, Tusheng Zhang}
}
\numberwithin{equation}{section}
\allowdisplaybreaks[1]
\newtheorem{theorem}{Theorem}[section]
\newtheorem{proposition}[theorem]{Proposition}
\newtheorem{lemma}[theorem]{Lemma}

\theoremstyle{definition}

\newcommand{\R}{\mathbb R}
\newcommand{\Pbb}{\mathbb P}
\newcommand{\Ebb}{\mathbb E}
\newcommand{\PB}{P_B}

\newcommand{\ann}{\mathrm{ann}}
\newcommand{\dd}{\,\mathrm d}
\newcommand{\cE}{\mathcal E}
\newcommand{\cF}{\mathcal F}
\newcommand{\cL}{\mathscr L}
\DeclareMathOperator{\supp}{supp}
\title[Small-time annealed heat-kernel asymptotics]{Small-time asymptotics of heat kernels of  one-dimensional diffusions in a random environment}
\author{Yiduo Wang}
\thanks{Yiduo Wang: School of Mathematical Sciences, University of Science and Technology of China, Hefei 230026, Anhui, China. Email: \texttt{wangyiduo@mail.ustc.edu.cn}.}
\author{Saisai Yang}
\thanks{Saisai Yang: School of Mathematical Sciences, University of Science and Technology of China, Hefei 230026, Anhui, China. Email: \texttt{yangss@ustc.edu.cn}.}
\author{Tusheng Zhang}
\thanks{Tusheng Zhang: Department of Mathematics, University of Manchester, Manchester M13 9PL, United Kingdom. Email: \texttt{tusheng.zhang@manchester.ac.uk}.}
\date{}
\subjclass[2020]{60J60, 60K37, 60F10, 31C25}
\keywords{random environment; small-time asymptotics; heat kernel; Dirichlet forms; Moser iteration}
\begin{document}
	\begin{abstract}
		We establish the Varadhan small-time asymptotics for the quenched and annealed
		heat kernels of one-dimensional diffusions in a random environment
		with generator
		\[
		\mathcal L_W f(x)
		=e^{-\rho(x,W)}\bigl(e^{a(x,W)}f'(x)\bigr)'.
		\]
		The coefficients $a$ and $\rho$ are continuous in space and satisfy
		a local exponential moment condition. We assume that the law of
		the intrinsic coordinate map $\Lambda_W$ has compact support
		$\mathscr L$ consisting of strictly increasing functions.
		Let $q^W(t,x,y)$ and $q(t,x,y)=\mathbb E[q^W(t,x,y)]$ denote
		the quenched and annealed heat kernels with respect to Lebesgue
		measure, respectively. We prove that, for almost every environment $W$,
		\[
		\lim_{t\downarrow0}t\log q^W(t,x,y)
		=-\frac12|\Lambda_W(y)-\Lambda_W(x)|^2,
		\]
		and that
		\[
		\lim_{t\downarrow0}t\log q(t,x,y)
		=-\frac12\min_{\Lambda\in\mathscr L}
		|\Lambda(y)-\Lambda(x)|^2.
		\]
		Both limits hold uniformly on compact subsets of $\mathbb R^2$.
		The framework includes Brox diffusion, formally described by
		\[
		dX_t=dB_t-\frac12\dot W(X_t)\,dt,
		\]
		where $B$ is a standard Brownian motion and $W$ is an independent
		two-sided Brownian motion.
	\end{abstract}
	\maketitle
	\section{Introduction}
	Markov processes in random environments have been extensively
	studied since Sinai's seminal work \cite{Sinai} on one-dimensional
	random walks in random media. Subsequent research has addressed
	localization and long-time asymptotics of one-dimensional diffusions
	\cite{Brox,KT,HSY}, as well as homogenization in random media
	\cite{Lejay,Rhodes}.
	
	One important model is Brox diffusion, introduced in \cite{Brox} as
	a continuous-space analogue of Sinai's random walk. Formally, it is
	described by the stochastic differential equation
	\begin{equation}\label{intro:brox}
		\dd X_t=\dd B_t-\frac12\dot W(X_t)\dd t,
	\end{equation}
	where $B$ is a standard Brownian motion and $W$ is an independent
	two-sided Brownian motion representing the environment.
	
	The purpose of this work is to establish the Varadhan small-time asymptotics for
	the quenched and annealed heat kernels of one-dimensional diffusions
	in random environments, including Brox diffusion. More precisely, we
	consider the diffusion $X^W$ associated with the formal generator
	\begin{equation}\label{intro:generator}
		\mathcal L_W f(x)
		=e^{-\rho(x,W)}\bigl(e^{a(x,W)}f'(x)\bigr)',
	\end{equation}
	where $W$ belongs to an environment probability space
	$(\Omega,\mathcal F,\Pbb)$, and $a(\cdot,W)$ and $\rho(\cdot,W)$
	are continuous.
	Let $q^W(t,x,y)$ denote the transition density with respect to Lebesgue
	measure for a fixed environment, and set
	\[
	q(t,x,y)=\Ebb[q^W(t,x,y)],
	\]
	where $\Ebb$ denotes expectation with respect to $\Pbb$.
	Our aim is to identify the limits of $t\log q^W(t,x,y)$ and
	$t\log q(t,x,y)$ as $t\downarrow0$.
	
	One-dimensional diffusions in Brownian potentials have been studied
	from several perspectives. Brox \cite{Brox} established localization
	on the characteristic spatial scale $(\log t)^2$ in the long-time
	regime. For Brownian potentials with an additional linear drift,
	Kawazu and Tanaka \cite{KT} and Hu, Shi and Yor \cite{HSY} studied
	limit theorems and rates of convergence. Hu, L\^e and Mytnik
	\cite{HLM} subsequently gave a rigorous interpretation of the
	singular stochastic differential equation \eqref{intro:brox} and
	proved existence and uniqueness of a strong solution. More recently,
	Chen and Wang \cite{CW} obtained quenched heat-kernel estimates for
	short times and annealed heat-kernel estimates for large times.
	Wang, Yang and Zhang \cite{WYZ} established a small-time annealed
	path large deviation principle for a class of one-dimensional
	diffusions in random environments that includes Brox diffusion.
	
	The study of small-time heat-kernel asymptotics originates with the
	work of Varadhan \cite{Varadhan,Varadhan2}, which relates the leading
	logarithmic behavior of a heat kernel to the squared distance
	associated with the diffusion. Gaussian bounds for divergence-form
	equations were developed by Aronson \cite{Aronson}, and analogous
	upper bounds in the framework of local Dirichlet spaces were obtained
	by Sturm \cite{Sturm}. Norris \cite{Norris} studied heat-kernel
	asymptotics in Lipschitz Riemannian manifolds. For singular drifts,
	Chen, Fang and Zhang \cite{CFZ} established a Varadhan-type formula
	for Brownian motion with measure-valued drift in a suitable Kato class.
	
%
	
	Define the intrinsic coordinate and the associated distance by
	\begin{equation}\label{intro:coordinate}
		\Lambda_W(x)=\int_0^x
		\sqrt{\frac{e^{\rho(z,W)-a(z,W)}}2}\dd z,
		\qquad d_W(x,y)=|\Lambda_W(y)-\Lambda_W(x)|.
	\end{equation}
	Under the continuity and conservativeness hypotheses of Section~2,
	we prove that, for almost every environment $W$,
	\begin{equation}\label{intro:quenched}
		\lim_{t\downarrow0}t\log q^W(t,x,y)
		=-\frac12 d_W(x,y)^2.
	\end{equation}
	For the annealed kernel, we additionally impose a local exponential
	moment condition and assume that
	\[
	\cL=\supp\operatorname{Law}(\Lambda_W)
	\]
	is compact in $C_{\mathrm{loc}}(\R)$ and consists of strictly
	increasing functions. We then obtain
	\begin{equation}\label{intro:annealed}
		\lim_{t\downarrow0}t\log q(t,x,y)
		=-\frac12\min_{\Lambda\in\cL}|\Lambda(y)-\Lambda(x)|^2.
	\end{equation}
	Both limits hold uniformly on compact subsets of $\R^2$.
	For Brox diffusion, $a=-W$ and $\rho=\log2-W$, so
	$\Lambda_W(x)=x$; both limits reduce to $-|x-y|^2/2$.
	
	We establish the lower and upper bounds separately, following the
	probabilistic and analytic approach of \cite{WYZ}. For the lower
	bound, the It\^o--McKean representation and a Cameron--Martin
	estimate give a lower bound for endpoint probabilities. A local
	Gaussian estimate \cite{Aronson} then transfers this bound to the transition
	density, using the short final-time decomposition in \cite{CFZ}.
	The annealed lower bound follows by restricting the expectation to
	a suitable set of environments of positive probability. For the
	upper bound, we use Moser iteration and weighted energy estimates
	\cite{WYZ,Davies} for the one-dimensional Dirichlet form
	\[
	\mathcal E_W(f,g)=\int_\R e^{a(x,W)}f'(x)g'(x)\dd x,
	\qquad \mu_W(\dd x)=e^{\rho(x,W)}\dd x.
	\]
	The dependence of the estimates on the environment is kept explicit
	so that the resulting bound can be averaged under the local moment
	condition. This approach uses continuity of the coefficients and
	does not require their differentiability; the generator and semigroup
	are understood through the Dirichlet form \cite{FOT}.
	
	We now introduce some notation used throughout the article.
	\begin{itemize}
		\item Let $C_{\mathrm{loc}}(\R)$ be the space of all continuous
		real-valued functions on $\R$, endowed with the topology of
		uniform convergence on compact sets.
		
		\item For a compact interval $K$, let $C(K)$ be the space of
		continuous real-valued functions on $K$, equipped with the norm
		\[
		\|f\|_{C(K)}:=\sup_{x\in K}|f(x)|.
		\]
		
		\item The symbols $c$ and $C$ denote generic positive constants
		whose values may change from line to line. Any dependence on
		the environment $W$ is indicated explicitly.
	\end{itemize}
	
	\section{Preliminaries and main results}
	Let $(\Omega,\mathcal F,\Pbb)$ be a probability space, and write
	$\Ebb$ for expectation with respect to $\Pbb$. An element
	$W\in\Omega$ is called an environment. Let
	$a,\rho:\R\times\Omega\to\R$ be jointly measurable functions.
	We consider the one-dimensional diffusion $X^W$ associated with
	the formal generator
	\[
	\mathcal L_W f(x)
	=e^{-\rho(x,W)}\bigl(e^{a(x,W)}f'(x)\bigr)'.
	\]
	For a bounded interval $K$, set
	\begin{equation}\label{eq:M}
		M(K):=\sup_{z\in K}|\rho(z,W)|
		+\sup_{z\in K}|a(z,W)|.
	\end{equation}
	We now list the assumptions on the coefficients.
	
	\medskip
	\noindent
	\textbf{Hypothesis ($H_1$) (Continuity and conservativeness).}
	For $\Pbb$-almost every $W$, the functions $a(\cdot,W)$ and
	$\rho(\cdot,W)$ are continuous. Moreover,
	\begin{equation}\label{eq:H1}
		\int_0^\infty e^{-a(z,W)}\dd z
		=\int_{-\infty}^0 e^{-a(z,W)}\dd z
		=\infty.
	\end{equation}
	
	\medskip
	\noindent
	\textbf{Hypothesis ($H_2$) (Local exponential integrability).}
	For every $R>0$,
	\begin{equation}\label{eq:H2}
		\Ebb\exp\{2M([-R,R])\}<\infty.
	\end{equation}
	
	\medskip
	\noindent
	\textbf{Hypothesis ($H_3$) (Compactness and non-collapse).}
	Define the intrinsic coordinate
	\begin{equation}\label{eq:coordinates}
		\Lambda_W(x):=
		\int_0^x\sqrt{\frac{e^{\rho(z,W)-a(z,W)}}2}\dd z,
		\qquad x\in\R.
	\end{equation}
	We assume that
	\[
	\cL:=\supp\operatorname{Law}(\Lambda_W)
	\]
	is compact in $C_{\mathrm{loc}}(\R)$ and that every
	$\Lambda\in\cL$ is strictly increasing.
	
	\medskip
	Under $H_1$, the diffusion can be constructed by a scale
	transformation and a time change of Brownian motion, as in
	\cite{WYZ}. Let $B=(B(t))_{t\ge0}$ be a standard Brownian motion
	starting from zero, independent of the environment, with law
	$P_B$ and expectation $E_B$. For a fixed environment $W$, define
	\begin{equation}\label{eq:scale-sigma}
		\begin{aligned}
			S_W(x)&:=\int_0^x e^{-a(z,W)}\dd z,\\
			\sigma_W(z)&:=\sqrt2\exp\left\{
			-\frac{\rho(S_W^{-1}(z),W)+a(S_W^{-1}(z),W)}2
			\right\}.
		\end{aligned}
	\end{equation}
	By $H_1$, $S_W$ is an increasing $C^1$ bijection of $\R$
	onto itself. For a fixed starting point $x\in\R$, set
	\[
	\phi_W(t):=
	\int_0^t\sigma_W\bigl(B(s)+S_W(x)\bigr)^{-2}\dd s.
	\]
	The clock $\phi_W$ is continuous and strictly increasing, and
	tends to infinity almost surely. The It\^o--McKean representation
	of the diffusion starting from $x$ is
	\begin{equation}\label{eq:construction}
		X^W(t)=S_W^{-1}\bigl(B(\phi_W^{-1}(t))+S_W(x)\bigr),
		\qquad t\ge0.
	\end{equation}
	
	Denote its quenched law and expectation by $P_x^W$ and $E_x^W$.
	The annealed law is defined by
	\[
	P_x^{\ann}(A):=\int_\Omega P_x^W(A)\,\Pbb(\dd W),
	\qquad
	A\in\mathcal B\bigl(C([0,\infty);\R)\bigr).
	\]
	For a bounded Borel function $f$, the quenched transition
	semigroup is
	\begin{equation}\label{eq:process-semigroup}
		P_t^W f(x):=E_x^W[f(X^W(t))].
	\end{equation}
	
	As proved in \cite{WYZ}, this is the symmetric semigroup
	associated with the regular Dirichlet form
	\begin{equation}\label{eq:form}
		\cE_W(f,g)=\int_\R e^{a(x,W)}f'(x)g'(x)\dd x
	\end{equation}
	on $L^2(\R,\mu_W)$, where
	\[
	\mu_W(\dd x):=e^{\rho(x,W)}\dd x,
	\]
	with domain
	\[
	\cF_W=
	\left\{
	f\in L^2(\mu_W):
	f\text{ is locally absolutely continuous and }
	\int_\R e^{a(x,W)}|f'(x)|^2\dd x<\infty
	\right\}.
	\]
	See also \cite[Chapter~6]{FOT} for the time-change construction
	of symmetric diffusions.
	
	The intrinsic distance associated with $\cE_W$ is
	\begin{equation}\label{eq:distance}
		d_W(x,y):=|\Lambda_W(y)-\Lambda_W(x)|,
		\qquad x,y\in\R.
	\end{equation}
	For $x\in\R$ and $r>0$, write
	\[
	D_W(x,r):=\{y\in\R:d_W(x,y)<r\}.
	\]
	For an interval $K$, let
	\[
	K^r:=\{z\in\R:\operatorname{dist}(z,K)\le r\}
	\]
	denote its closed Euclidean $r$-neighborhood.
	
	The transition density of $X^W$ relative to $\mu_W$ will be denoted
	by $p^W(t,x,y)$, so that
	\[
	P_x^W(X^W(t)\in D)=\int_Dp^W(t,x,y)\mu_W(\dd y),
	\qquad D\in\mathcal B(\R).
	\]
	Its existence is recalled in Lemma~\ref{lem:kernel} in the appendix.
	Define
	\begin{equation}\label{eq:densities}
		q^W(t,x,y):=p^W(t,x,y)e^{\rho(y,W)},
		\qquad
		q(t,x,y):=\Ebb[q^W(t,x,y)].
	\end{equation}
	These are the quenched and annealed transition densities
	with respect to Lebesgue measure, respectively.
	
	We now state our main results.
	
	\begin{theorem}\label{thm:quenched}
		Assume $H_1$. Then, for almost every environment $W$ and
		every compact interval $K\subset\R$,
		\begin{equation}\label{eq:main-quenched}
			\lim_{t\downarrow0}\sup_{x,y\in K}
			\left|
			t\log q^W(t,x,y)+\frac12d_W(x,y)^2
			\right|=0.
		\end{equation}
	\end{theorem}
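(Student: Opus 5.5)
We fix an environment $W$ in the full-measure set where $H_1$ holds. With $W$ fixed, the statement is purely deterministic: a Varadhan formula for a one-dimensional symmetric diffusion with continuous, nondegenerate coefficients. The cleanest route is to pass to the intrinsic coordinate $u=\Lambda_W(x)$. Continuity of $a,\rho$ makes $\Lambda_W$ a $C^1$ diffeomorphism onto its image, with derivative bounded above and below on compacts. Since $e^{\rho(y,W)}$ is bounded above and below on $K$, it suffices to prove the statement for $p^W$ relative to $\mu_W$; the factor contributes $t\cdot O(1)\to0$ uniformly. We then prove the lower bound $\liminf t\log q^W\ge -\frac12 d_W^2$ and the upper bound $\limsup\le -\frac12 d_W^2$ separately, both uniformly on $K\times K$.

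\textbf{Lower bound.} We use the It\^o--McKean representation \eqref{eq:construction}. For $x,y\in K$ and small $\delta>0$, we bound $P^W_x(X^W(t)\in D_W(y,\delta))$ from below by forcing the path to follow the geodesic $s\mapsto\Lambda_W^{-1}\bigl((1-s/t)\Lambda_W(x)+(s/t)\Lambda_W(y)\bigr)$ within a tube. In scale coordinates this means $B$ follows a prescribed $C^1$ curve $h$ for which the time change $\phi_W$ matches. The Cameron--Martin theorem together with the small-ball estimate for Brownian motion then gives
\[
\liminf_{t\downarrow 0} t\log P^W_x\bigl(X^W(t)\in D_W(y,\delta)\bigr)\ge -\tfrac12 (d_W(x,y)+C\delta)^2 ,
\]
uniformly in $x,y\in K$. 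The energy of $h$, computed in the clock, equals $\frac12 d_W^2/t$; this is exactly why $\Lambda_W$ carries the factor $\sqrt{e^{\rho-a}/2}$. To pass from probabilities to the density, we split $[0,t]=[0,t-\varepsilon t]\cup[t-\varepsilon t,t]$ as in \cite{CFZ} and use Chapman--Kolmogorov:
\[
p^W(t,x,y)\ge \int_{D_W(y,\delta)} p^W\bigl((1-\varepsilon)t,x,z\bigr)\,p^W(\varepsilon t,z,y)\,\mu_W(\dd z).
\]
On the last factor we apply Aronson's local Gaussian lower bound. It is valid because, on a compact neighborhood of $K$, the form $\cE_W$ is uniformly elliptic relative to $\mu_W$ with constants depending on $M(K^1)$. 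This gives $p^W(\varepsilon t,z,y)\ge c\,t^{-1/2}e^{-C\delta^2/(\varepsilon t)}$. Taking $\delta^2=\varepsilon^2$ and letting $t\to0$, then $\varepsilon\to0$, yields the lower bound.

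\textbf{Upper bound.} We follow Davies' perturbation method for the Dirichlet form. For a bounded Lipschitz $\psi$ with $e^{a}|\psi'|^2\le e^{\rho}\cdot\frac12\alpha^2$, we have $|\Gamma(\psi)|\le \alpha^2$ in the intrinsic metric. Here the carr\'e du champ is $e^{a-\rho}|\psi'|^2$, and $\Lambda_W'^2=e^{\rho-a}/2$ gives $\Gamma(\Lambda_W)=1/2$, consistent with the convention $d_W^2/2$. The twisted semigroup $e^{\psi}P^W_te^{-\psi}$ then has $L^2\to L^2$ norm at most $e^{\alpha^2 t/2}$ (up to the normalisation fixed by the generator). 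We combine this with an $L^2\to L^\infty$ bound for the twisted semigroup obtained by local Moser iteration on balls around $K$, which again uses only local ellipticity constants controlled by $M(K^r)$. Since $X^W$ is conservative by $H_1$, only local estimates are needed, but we localise carefully: we choose $\psi$ to be $\alpha\,\Lambda_W$ truncated outside a large interval, so that it is bounded. This gives
\[
p^W(t,x,y)\le C_W(K)\,t^{-N}\exp\bigl\{\alpha(\Lambda_W(x)-\Lambda_W(y))+\tfrac12\alpha^2 t\bigr\}(1+o(1)).
\]
Optimising with $\alpha=d_W(x,y)/t$ gives $t\log p^W\le -\frac12 d_W^2+o(1)$, uniformly on $K$ because $d_W$ is uniformly continuous there.

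\textbf{Main obstacle.} I expect the main obstacle to be the upper bound's Moser step. It must produce a polynomial prefactor that is uniform in the large twist $\alpha\sim 1/t$. The standard remedy is to apply the iteration at scale $\sqrt t$, where $\alpha\sqrt t\cdot\sqrt t=\alpha t$ stays bounded. The loss this incurs is $e^{C\alpha\sqrt t\cdot\sqrt t}$, which contributes only $O(\sqrt t)\cdot\alpha$-type errors. After multiplication by $t$, these vanish once $\alpha t$ is bounded on $K$.
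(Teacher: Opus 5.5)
Your proposal is correct and follows essentially the same route as the paper. The lower bound uses the It\^o--McKean representation, a Cameron--Martin tube estimate along the intrinsic geodesic, and a $(1-\varepsilon)t$, $\varepsilon t$ Chapman--Kolmogorov split with a local Gaussian lower bound; the paper proves that local bound by hand in Lemma~\ref{lem:coarse} rather than quoting Aronson. The upper bound uses Davies' twisted semigroup combined with a local Moser mean-value estimate.

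The one point to adjust is your accounting of losses in the Moser step. The oscillation of $\psi$ on a ball of radius $\sqrt t$ costs only $e^{O(\alpha\sqrt t)}$, not $e^{C\alpha t}$ as you wrote, and this is harmless after multiplying by $t$. The real danger is different: a mean-value cylinder of full size $\sqrt t$ that extends forward in time lets the twisted $L^2$ norm grow by a factor $e^{c\alpha^2 t}$. That is of the same order as the main term, so to obtain your sharp exponent you need backward cylinders, or radius $\kappa\sqrt t$ with $\kappa\downarrow0$. The paper instead accepts the lossy factor $e^{h^2 s}$ in Lemma~\ref{lem:weighted-point}. It absorbs this loss by factoring through $P^\psi_s P^\psi_{t-2s}$ with $s=\eta t/2$ and then letting $\eta\downarrow0$.
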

	
	\begin{theorem}\label{thm:annealed}
		Assume $H_1$--$H_3$, and set
		\begin{equation}\label{eq:J}
			J(x,y)=\frac12\min_{\Lambda\in\cL}
			|\Lambda(y)-\Lambda(x)|^2,
			\qquad x,y\in\R.
		\end{equation}
		Then, for every compact interval $K\subset\R$,
		\begin{equation}\label{eq:main-annealed}
			\lim_{t\downarrow0}\sup_{x,y\in K}
			\left|t\log q(t,x,y)+J(x,y)\right|=0.
		\end{equation}
	\end{theorem}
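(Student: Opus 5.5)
We prove the annealed statement by establishing matching lower and upper bounds for $t\log q(t,x,y)$, uniformly for $x,y\in K$. Both bounds rely on the quenched estimates behind Theorem~\ref{thm:quenched}, with the constants tracked through the local quantity $M(K^r)$, so that they can be integrated against $\Pbb$.

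\textbf{Lower bound.} Fix $\varepsilon>0$. The function $\Lambda\mapsto\frac12|\Lambda(y)-\Lambda(x)|^2$ is continuous on $C_{\mathrm{loc}}(\R)$, uniformly in $x,y\in K$, and $\cL$ is compact. We can therefore cover $\cL$ by finitely many small neighbourhoods $U_1,\dots,U_N$ of points $\Lambda_1,\dots,\Lambda_N\in\cL$, chosen so that for each pair $(x,y)\in K^2$ some $U_i$ contains only $\Lambda$ with $\frac12|\Lambda(y)-\Lambda(x)|^2\le J(x,y)+\varepsilon$. Each $U_i$ meets the support, so
\[
\Pbb(\Lambda_W\in U_i)>0 .
\]
Intersecting with $\{M(K^1)\le R\}$ for a large $R$ keeps the probability positive. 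On this set of environments, the quenched lower bound from the Cameron--Martin and Aronson argument holds with constants depending only on $R$:
\[
q^W(t,x,y)\ge c_R\exp\bigl\{-(d_W(x,y)^2/2+\varepsilon)/t\bigr\},\qquad t\le t_0(R,\varepsilon).
\]
Taking expectations restricted to this set and letting $\varepsilon\to0$ gives $\liminf_{t\downarrow0} t\log q\ge -J$, uniformly on $K$. The key point is that the quenched lower bound must be uniform over environments with bounded $M(K^1)$ and $\Lambda_W$ close to a fixed $\Lambda_i$. This uniformity follows from the explicit dependence of the constants on $M$.

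\textbf{Upper bound.} From the Davies-type weighted estimate and Moser iteration, I expect a quenched bound of the form
\[
q^W(t,x,y)\le C\,e^{C M(K^r)}\,t^{-\kappa}\exp\Bigl\{-\frac{d_W(x,y)^2-\delta}{2t}\Bigr\}
\]
for $x,y\in K$, small $t$ and a fixed small $\delta$. The factor $e^{\rho(y)}$ and the local volume and Sobolev constants are all polynomial in $e^{M(K^r)}$, with exponents arranged so that only $e^{2M}$, or a power that can be split off via Hölder, is needed. Next, split $\Omega$ according to whether the distance from $\Lambda_W$ to $\cL$ in the local uniform metric on $K^r$ is below $\eta$. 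Since $\Lambda_W\in\cL$ almost surely, the complementary event is null. On the full-measure event, continuity and compactness give $d_W(x,y)^2\ge 2J(x,y)-\varepsilon$. Hence
\[
q(t,x,y)\le C\,t^{-\kappa}\,\Ebb\bigl[e^{CM(K^r)}\bigr]\exp\{-(J-\varepsilon-\delta)/t\}.
\]
The expectation is finite by $H_2$, provided $C\le2$, or after tuning the weight in the estimate. Letting $\varepsilon,\delta\to0$ yields $\limsup_{t\downarrow0} t\log q\le -J$, uniformly on $K$.

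\textbf{Main obstacle.} The hard step is the upper bound: obtaining the quenched Gaussian upper bound with a prefactor whose dependence on $W$ is exactly of order $e^{2M(K^r)}$, or whose excess can be absorbed using Hölder and $H_2$. A second difficulty is making the localisation work: in Davies' method the off-diagonal exponent involves only the intrinsic distance restricted to a neighbourhood of $K$, and not global quantities of the environment. I would first try to run the Moser iteration on balls of Euclidean radius $r$ around $K$ and estimate the local Sobolev and volume-doubling constants by $e^{M(K^r)}$. Any remaining powers would then be handled by Hölder's inequality in $W$, combined with the fact that $t^{-\kappa}$ is negligible at exponential scale.
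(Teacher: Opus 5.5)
Your covering of $\cL$ and your use of $\Lambda_W\in\cL$ almost surely in the upper bound are fine. The gaps are elsewhere.

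\textbf{Lower bound.} Your argument rests on the claim that, on $\{M(K^1)\le R\}\cap\{\Lambda_W\in U_i\}$, the bound $q^W(t,x,y)\ge c_R\exp\{-(d_W(x,y)^2/2+\varepsilon)/t\}$ holds for all $t\le t_0(R,\varepsilon)$. This claim is false in general. Only the Aronson-type local bound of Lemma~\ref{lem:coarse} has constants depending on $M$ alone. The Cameron--Martin endpoint bound uses a tube radius fixed by \eqref{eq:tube-radius}. That radius depends on the modulus of continuity of $\sigma_W^{-2}$ on $S_W(K^1)$, i.e.\ of the coefficients, and neither $M$ nor $\Lambda_W$ controls it.

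For example, take $a=\rho=g(\cdot/\epsilon)$ with $g$ periodic and nonconstant. Then $\Lambda_W(x)=x/\sqrt2$ and $M\le2\|g\|_\infty$ for every $\epsilon$. But for fixed $t$ and $\epsilon\to0$, the kernel approaches the homogenized one with diffusivity $(\langle e^{g}\rangle\langle e^{-g}\rangle)^{-1}<1$, where $\langle\cdot\rangle$ is the period average. That kernel decays at a strictly larger exponential rate than $e^{-|x-y|^2/(4t)}=e^{-d_W(x,y)^2/(2t)}$, so no $t_0(R,\varepsilon)$ can work.

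The paper needs no uniformity in $W$. Proposition~\ref{prop:quenched-lower} holds for each fixed $W$, uniformly in $x,y\in K$. Hence the events $B_{i,n}=A_i\cap\{t\log q^W+\tfrac12d_W^2\ge-\delta \text{ on } K\times K \text{ for } 0<t\le 1/n\}$ increase to $A_i$ up to a null set. So some $B_{i,n_i}$ has positive probability, and restricting the expectation to it gives $q\ge\Pbb(B_{i,n_i})e^{-(J+2\delta)/t}$. You could also repair your own route by further intersecting with a positive-probability event on which the modulus of continuity of $a+\rho$ on $K^1$ is dominated by a fixed deterministic modulus.

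\textbf{Upper bound.} You leave the decisive estimate unproved, and the fallbacks you name would not help. $H_2$ gives only $\Ebb e^{2M}<\infty$, and Hölder's inequality in $W$ can only raise the moment that is required. The choice of weight affects the Gaussian exponent, not the $W$-dependence of the prefactor. So you must actually prove the sharp Gaussian bound with prefactor at most $e^{2M(K^1)}$.

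The paper obtains exactly this by careful bookkeeping:
\begin{itemize}
\item It runs the Moser mean-value iteration on the intrinsic ball $D_W(x,\ell)$ with $\ell=\tfrac14e^{-M/2}\sqrt s$, on which $\psi$ varies by at most $h\ell$. This gives $|P_s^\psi f(x)|\le Cs^{-1/4}e^{M/2}e^{h^2s}\|f\|_2$, where the exponent $M/2$ comes from $\sum_n 2/(3p_n)=1/2$.
\item It writes $e^{\psi(x)-\psi(y)}p^W(t,x,y)=P_s^\psi P_{t-2s}^\psi g_y(x)$ with $s=\eta t/2$. The pointwise bound is used once at $x$ and once, by duality, to bound $\|g_y\|_2$. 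The global Davies bound \eqref{eq:L2} handles the middle factor. Together these produce $e^{M}$.
\item The conversion $q^W=p^We^{\rho(y)}$ contributes the second factor $e^{M}$, which yields \eqref{eq:quenched-upper}.
\end{itemize}
Since the Davies step is global and only the two pointwise steps are local, the localisation issue you raise does not arise.
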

	
	\section{Lower bound}
	We first establish a local lower bound.
	
	\begin{lemma}\label{lem:coarse}
		For each compact interval $K$ there are $c,C,t_0>0$, depending on
		$a,\rho$ on $K^2$, such that
		\begin{equation}\label{eq:coarse}
			q^W(t,x,y)\ge c t^{-1/2}
			\exp\!\left(-C\frac{|x-y|^2}{t}\right),
			\qquad x,y\in K,\quad0<t\le t_0.
		\end{equation}
	\end{lemma}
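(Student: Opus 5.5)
The plan is to prove the bound by Aronson's local lower Gaussian estimate for a uniformly elliptic divergence-form operator, after localizing to $K^2$. Fix $W$ with $a(\cdot,W),\rho(\cdot,W)$ continuous, and put $M:=M(K^2)$. On $K^2$ we then have $e^{-M}\le e^{a},e^{\rho}\le e^{M}$. The semigroup $P^W_t$ comes from the Dirichlet form $(\cE_W,\cF_W)$ on $L^2(\mu_W)$, and the density $p^W$ exists by Lemma~\ref{lem:kernel}. Consider the part of the process killed on exiting the open interval $U=\operatorname{int}K^1$. Its density $p^W_U$ is dominated by $p^W$, so it suffices to prove \eqref{eq:coarse} with $p_U^W(t,x,y)e^{\rho(y,W)}$ in place of $q^W$. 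On $U$ the killed process corresponds to the parabolic equation
\[
e^{\rho}\partial_t u=(e^{a}u')'
\]
with Dirichlet boundary conditions. This is a divergence-form equation with measurable coefficients bounded above and below by $e^{\pm M}$, in the setting of Aronson \cite{Aronson}. Because the coefficient sits in front of $\partial_t$, I would use the version of Aronson's estimates for $\rho$-weighted equations, which are covered by the same Moser/Nash arguments. Equivalently, one can work with the local Dirichlet space $(\cE_W,\mu_W)$ restricted to $U$: it satisfies volume doubling and a Poincaré inequality with constants depending only on $M$ and $|K|$, so Saloff-Coste/Sturm type two-sided local estimates apply.

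The key step is the interior lower bound. For $x,y$ in a compact set at positive distance from $\partial U$ (here $K$ sits at distance $1$ from $\partial U$), the Dirichlet heat kernel satisfies
\[
p_U^W(t,x,y)\ge c\,t^{-1/2}\exp\bigl(-C|x-y|^2/t\bigr),\qquad 0<t\le t_0,
\]
with $c,C,t_0$ depending only on $M$ and the geometry. The argument has three parts.

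\emph{Near-diagonal bound.} A parabolic Harnack inequality from Moser iteration, applied in small cylinders inside $U$, gives $p_U^W(t,x,y)\ge c t^{-1/2}$ when $|x-y|\le\sqrt t$. This uses the upper Nash bound $p\le Ct^{-1/2}$, conservation of mass away from the boundary for short times, and Harnack.

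\emph{Chaining.} For $|x-y|>\sqrt t$, chain through $n\approx C|x-y|^2/t$ intermediate points along the segment $[x,y]\subset K$. Applying the Chapman--Kolmogorov equation with the near-diagonal bound at each step gives a factor $c^n$, which produces $\exp(-C|x-y|^2/t)$.

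\emph{Conversion.} Multiplying by $e^{\rho(y,W)}\ge e^{-M}$ converts the bound on $p_U^W$ into the stated bound for $q^W$.

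The main obstacle I expect is the near-diagonal step. It requires applying Aronson's lower bound to a Dirichlet-form operator with only continuous (not differentiable) coefficients and with the weight $e^{\rho}$ on the time derivative, and one must check that the killed kernel stays comparable to the free one. I would try the reduction above first: the weight is bounded above and below on $U$, so the standard Harnack and chaining argument of \cite{Aronson} goes through verbatim with constants depending on $e^{\pm M}$. If that proves awkward, the alternative is to change variables to the scale function $S_W$. This reduces the problem to Brownian motion time-changed by the bounded density $\sigma_W^{-2}$, and the estimate then follows from Aronson's bound for the equation $\sigma^{-2}\partial_t u=\frac12 u''$.
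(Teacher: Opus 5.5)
Your proposal is correct. Its last two steps coincide with the paper's: chaining through $n\asymp|x-y|^2/t$ intermediate points, then multiplying by $e^{\rho(y,W)}\ge e^{-M}$.

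The routes differ only in how the near-diagonal bound $p^W(t,x,y)\ge c_1t^{-1/2}$ for $|x-y|\le\delta\sqrt t$ is obtained.

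\begin{itemize}
\item \emph{Your route.} You localize by killing on $\operatorname{int}K^1$ and import Moser's parabolic Harnack inequality, or Aronson/Sturm local two-sided bounds, for a weighted divergence-form equation.
\item \emph{The paper's route.} It stays with the free kernel and uses only one-dimensional Dirichlet-form tools. The exit estimate of \cite{WYZ} gives $P_x^W(|X^W(t/2)-x|<R\sqrt t)\ge\frac12$. Cauchy--Schwarz in $p^W(t,x,x)=\int p^W(t/2,x,z)^2\,\mu_W(\dd z)$ then yields $p^W(t,x,x)\ge c_0t^{-1/2}$. Next, the energy bound $\cE_W(p^W(t,x,\cdot),p^W(t,x,\cdot))\le Ct^{-3/2}$ combined with the embedding $|f(y)-f(x)|^2\le\bigl|\int_x^y e^{-a(z,W)}\dd z\bigr|\,\cE_W(f,f)$ gives $|p^W(t,x,y)-p^W(t,x,x)|\le Ct^{-3/4}|x-y|^{1/2}$. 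This Hölder bound does the job of the Harnack inequality.
\end{itemize}

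What each buys: the paper's argument is self-contained. It needs no comparison with a killed kernel, and no check that the Aronson--Moser theory covers this weighted form with only local coefficient bounds. Your argument is dimension-free, but it rests on exactly that check, which you rightly flag as the main obstacle. (Continuity versus differentiability of the coefficients is not the issue, since that theory handles measurable coefficients.)

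In one dimension the check is easy. In the speed coordinate $y=\int_0^xe^{\rho(z,W)}\dd z$ one has $\mathcal L_W=\partial_y(e^{a+\rho}\partial_y)$, and $\mu_W$ becomes Lebesgue measure. So the unweighted Aronson theory applies directly, with locally bounded coefficients. Your fallback through $S_W$ does not remove the weight: it leaves $\sigma_W^{-2}$ in front of $\partial_t$. That is harmless, but it is no simpler.
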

	\begin{proof}
		Fix an environment $W$ satisfying $H_1$, and put $M=M(K^2)$.
		All constants below depend only on $M$.
		
		We first establish a lower bound on the diagonal.
		For $x\in K^1$ and $0<r\le1$, let
		\[
		\tau_{x,r}:=\inf\{s\ge0:|X^W(s)-x|\ge r\}.
		\]
		Since
		\[
		d_W(x,x\pm r)\ge 2^{-1/2}e^{-M/2}r,
		\]
		the exit estimate obtained at the end of the proof of
		\cite[Theorem~2.2]{WYZ} gives
		\[
		P_x^W(\tau_{x,r}\le h)
		\le Ce^{CM}\exp\left(-c e^{-M}\frac{r^2}{h}\right)
		\le Ce^{CM}\frac{h}{r^2},
		\qquad 0<h<1.
		\]
		Choose $R\ge1$ sufficiently large and then $t_0\in(0,1)$
		such that $R\sqrt{t_0}\le1$. Applying this estimate with
		$r=R\sqrt t$ and $h=t/2$, we obtain
		\[
		P_x^W\bigl(|X^W(t/2)-x|<R\sqrt t\bigr)\ge\frac12,
		\qquad x\in K^1,\quad 0<t\le t_0.
		\]
		By symmetry, the semigroup identity and Cauchy--Schwarz,
		\[
		\begin{aligned}
			p^W(t,x,x)
			&=\int_\R p^W(t/2,x,z)^2\,\mu_W(\dd z)\\
			&\ge
			\frac{
				\left(\displaystyle\int_{x-R\sqrt t}^{x+R\sqrt t}
				p^W(t/2,x,z)\,\mu_W(\dd z)\right)^2
			}{
				\mu_W([x-R\sqrt t,x+R\sqrt t])
			}\\
			&\ge \frac{1}{8e^MR\sqrt t}
			=:c_0t^{-1/2}.
		\end{aligned}
		\]
		
		By \eqref{eq:row-bound} and \eqref{1}, we obtain
		\[
		\begin{aligned}
			&\cE_W\bigl(p^W(t,x,\cdot),p^W(t,x,\cdot)\bigr)^{1/2}\\
			&\qquad\le
			t^{-1/2}\|p^W(t/2,x,\cdot)\|_{L^2(\mu_W)}
			\le Ct^{-3/4},
			\qquad x\in K^1,\quad 0<t\le t_0.
		\end{aligned}
		\]
		For $x,y\in K^1$, the fundamental theorem of calculus and
		Cauchy--Schwarz yield
		\[
		\begin{aligned}
			|p^W(t,x,y)-p^W(t,x,x)|
			&\le
			\left(\int_{x\wedge y}^{x\vee y}e^{-a(z,W)}\,\dd z\right)^{1/2}\\
			&\qquad\times
			\cE_W\bigl(p^W(t,x,\cdot),p^W(t,x,\cdot)\bigr)^{1/2}\\
			&\le Ct^{-3/4}|x-y|^{1/2}.
		\end{aligned}
		\]
		Choose $\delta\in(0,1]$ such that $C\sqrt\delta\le c_0/2$.
		Combining this estimate with the diagonal lower bound gives
		\[
		p^W(t,x,y)\ge c_1t^{-1/2},
		\qquad
		x,y\in K^1,\quad |x-y|\le\delta\sqrt t,\quad 0<t\le t_0,
		\]
		where $c_1=c_0/2$.
		
		Finally, fix $x,y\in K$ and $0<t\le t_0$, and set
		\[
		n=\max\left\{1,
		\left\lceil\frac{4|x-y|^2}{\delta^2t}\right\rceil\right\},
		\qquad h=\frac tn,\qquad
		x_i=x+\frac{i}{n}(y-x),\quad 0\le i\le n.
		\]
		If $n=1$, the desired lower bound follows from the preceding
		estimate. Suppose $n\ge2$, and define
		\[
		I_i=\left[x_i-\frac{\delta\sqrt h}{8},
		x_i+\frac{\delta\sqrt h}{8}\right],
		\qquad 1\le i\le n-1.
		\]
		These intervals lie in $K^1$. For $z_0=x$, $z_n=y$ and
		$z_i\in I_i$, we have
		\[
		|z_i-z_{i-1}|
		\le\frac{|x-y|}{n}+\frac{\delta\sqrt h}{4}
		\le\delta\sqrt h.
		\]
		Moreover,
		\[
		\mu_W(I_i)\ge\frac{\delta e^{-M}}4\sqrt h.
		\]
		Repeated application of the semigroup identity therefore gives
		\[
		\begin{aligned}
			p^W(t,x,y)
			&\ge
			\int_{I_1\times\cdots\times I_{n-1}}
			\prod_{i=1}^{n}p^W(h,z_{i-1},z_i)
			\prod_{i=1}^{n-1}\mu_W(\dd z_i)\\
			&\ge
			(c_1h^{-1/2})^n
			\left(\frac{\delta e^{-M}}4\sqrt h\right)^{n-1}\\
			&\ge ct^{-1/2}e^{-Cn}\\
			&\ge ct^{-1/2}
			\exp\left(-C\frac{|x-y|^2}{t}\right),
		\end{aligned}
		\]
		where the last step uses
		$n\le1+4|x-y|^2/(\delta^2t)$.
		Multiplying by $e^{\rho(y,W)}\ge e^{-M}$ proves
		\eqref{eq:coarse}.
	\end{proof}
	\begin{lemma}
		\label{lem:endpoint}
		For a fixed environment, a compact interval $K$, $\tau>0$ and $r>0$,
		\begin{equation}\label{eq:endpoint}
			\liminf_{t\downarrow0}\inf_{x,y\in K}
			\left[t\log P_x^W(|X^W(\tau t)-y|<r)
			+\frac{d_W(x,y)^2}{2\tau}\right]\ge0.
		\end{equation}
	\end{lemma}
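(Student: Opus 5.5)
Fix the environment $W$. The plan is to use the It\^o--McKean representation \eqref{eq:construction} together with a Cameron--Martin shift of the driving Brownian motion $B$ along a deterministic path that moves $X^W$ from $x$ to $y$ in time $\tau t$ at the optimal intrinsic speed. The first step is to change to intrinsic coordinates. Set $u=\Lambda_W(X^W)$; since $\Lambda_W$ is a $C^1$ increasing bijection onto its range, with derivative bounded above and below on compacts, the event $\{|X^W(\tau t)-y|<r\}$ contains $\{|\Lambda_W(X^W(\tau t))-\Lambda_W(y)|<r'\}$ for some $r'>0$ depending only on $K$, $r$ and $W$. Formally $X^W$ has generator with diffusion coefficient $2e^{a-\rho}$, so $\Lambda_W(X^W)$ has unit diffusion coefficient (the factor $\sqrt{e^{\rho-a}/2}$ is chosen for exactly this) plus a drift, which need not exist classically. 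For this reason I work at the level of $B$ and the time change rather than with an SDE.

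The second step builds the target path. Let $\gamma(s)=\Lambda_W^{-1}\bigl(\Lambda_W(x)+\tfrac{s}{\tau t}(\Lambda_W(y)-\Lambda_W(x))\bigr)$ for $s\in[0,\tau t]$; this is the intrinsic geodesic. Its scale image $\eta(s)=S_W(\gamma(s))-S_W(x)$ determines the Brownian clock. If $X^W$ follows $\gamma$, then $B(\phi_W^{-1}(s))=\eta(s)$, and so $B$ must follow a path $h$ on the time interval $[0,T]$, where $T$ is obtained by inverting $\phi_W$ along $\eta$. Using $\dd\phi_W=\sigma_W^{-2}\dd s$, one computes $\int_0^T\dot h^2\dd s=\int_0^{\tau t}\dot\eta^2\sigma_W(\eta+S_W(x))^2\dd s$. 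A direct calculation with \eqref{eq:scale-sigma} gives $\dot\eta^2\sigma_W^2=2e^{a-\rho}\dot\gamma^2\cdot e^{-2a}e^{a}\cdots$, which collapses to $\dot\gamma^2\,2e^{a-\rho}=\bigl(\tfrac{\dd}{\dd s}\Lambda_W(\gamma)\bigr)^2$ up to the normalization. The Cameron--Martin energy is therefore $\tfrac12\int|\dot h|^2=d_W(x,y)^2/(2\tau t)$, which gives precisely the target rate.

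The third step is the tube estimate. Consider the event $A=\{\sup_{s\le T+\epsilon'}|B(s)-h(s)|<\epsilon\}$, where $h$ is extended to be constant after time $T$. By Cameron--Martin and the symmetry of small balls (Anderson's inequality, or simply Cauchy--Schwarz on the Girsanov density), $P_B(A)\ge P_B(\sup|B|<\epsilon)\exp(-\tfrac12\|h\|_{H}^2-\text{l.o.t.})$. On $A$, the continuity of $\sigma_W$ and of $S_W^{-1}$ on compacts makes $\phi_W$ within $o(1)\cdot T$ of its value along $h$. Hence $X^W(\tau t)$ lies within the desired $r$-neighbourhood of $y$, provided $\epsilon$ is small compared to $r$ uniformly over $x,y\in K$, because $T\asymp t$ and $\sup|B-h|<\epsilon$. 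The small-ball probability $P_B(\sup_{s\le T}|B|<\epsilon)\ge e^{-cT/\epsilon^2}$ contributes $t\log(\cdot)\to0$ as $t\downarrow0$ with $\epsilon$ fixed, since $T=O(t)$. Taking $t\log$ and letting $t\downarrow0$ gives \eqref{eq:endpoint}, with uniformity in $x,y\in K$ coming from the uniform continuity of $a,\rho$ on a neighbourhood of $K$.

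I expect the main obstacle to be controlling the time change: showing that on the tube $A$ the random time $\phi_W^{-1}(\tau t)$ is close enough to $T$, and that the resulting position error stays below $r$. I would first handle this by choosing $\epsilon$ small, using the bound $\sigma_W^{-2}\in[e^{-M},e^{M}]$ on $K^2$ so that $\phi_W$ is bi-Lipschitz, and noting that $h$ is constant near time $T$ after the extension. With these, a small error in the stopping time moves $B$ by at most $\epsilon$ plus a small modulus of continuity, and this is enough to keep $X^W(\tau t)$ within $r$ of $y$.
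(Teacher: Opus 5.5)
Your proposal is correct and follows essentially the paper's route: the It\^o--McKean representation, a Cameron--Martin shift of $B$ along the scale image of the constant-intrinsic-speed path (this is the paper's $h$ with $h'(u)=v/\sigma(h(u))$, after the Brownian rescaling $r=tu$), a tube of fixed radius, and control of the clock through $\sigma_W^{-2}\ge m>0$ and its modulus of continuity $\omega$. The point there is that a clock undershoot is at most $T\omega(\epsilon)/m=O(t)\,\omega(\epsilon)$ while $h$ moves at speed $O(1/t)$, so the position error is $O(\omega(\epsilon))$ uniformly in $t$; overshoot is handled by $h$ being constant after $T$.

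One slip needs correcting. The energy identity is $\int_0^T\dot h^2\dd r=\int_0^{\tau t}\dot\eta^2\sigma_W^{-2}\dd s$, with $\sigma_W^{-2}$ rather than $\sigma_W^{2}$. Then $\dot\eta^2\sigma_W^{-2}=e^{-2a}\dot\gamma^2\cdot\tfrac12e^{a+\rho}=\tfrac12e^{\rho-a}\dot\gamma^2=\bigl(\tfrac{\dd}{\dd s}\Lambda_W(\gamma(s))\bigr)^2$ exactly, so the Cameron--Martin cost is $d_W(x,y)^2/(2\tau t)$. The expression $2e^{a-\rho}\dot\gamma^2$ you wrote is not this quantity.
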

	
	\begin{proof}
		Fix $W,K,\tau$ and $r$.
		For $x\ne y$, put
		\[
		v=\frac{\Lambda_W(y)-\Lambda_W(x)}{\tau},\qquad
		T=\frac1v\int_{S_W(x)}^{S_W(y)}\sigma(z)\dd z,
		\]
		and define $h$ on $[0,T]$ by
		\begin{equation}\label{eq:control-definition}
			\int_{S_W(x)}^{h(u)}\sigma(z)\dd z=vu.
		\end{equation}
		The integral in \eqref{eq:control-definition}
		is strictly increasing in its upper endpoint. Thus $h$ is well
		defined, with
		\[
		h(0)=S_W(x),\qquad h(T)=S_W(y),\qquad
		h'(u)=\frac{v}{\sigma(h(u))}.
		\]
		Changing variables gives
		\begin{equation}\label{eq:control-clock}
			\int_0^T\sigma(h(u))^{-2}\dd u
			=\frac1v\int_{S_W(x)}^{S_W(y)}\sigma(z)^{-1}\dd z
			=\frac{\Lambda_W(y)-\Lambda_W(x)}v=\tau.
		\end{equation}
		When $x=y$, take $h(u)=S_W(x)$ and
		$T=\tau\sigma(S_W(x))^2$; then \eqref{eq:control-clock} also holds.
		In both cases, extend $h$ constantly after $T$. Its range lies in
		$S_W(K)$, and
		\begin{equation}\label{eq:control-energy}
			\frac12\int_0^\infty|h'(u)|^2\dd u
			=\frac{v^2}{2}\int_0^T\sigma(h(u))^{-2}\dd u
			=\frac{d_W(x,y)^2}{2\tau},
		\end{equation}
		where $v=0$ in the case $x=y$.
		
		We next choose a common time horizon and tube radius for all
		$x,y\in K$. Put
		\[
		J=S_W(K^1),\qquad m=\min_{z\in J}\sigma(z)^{-2}>0,
		\qquad T_*=\frac{\tau}{m}.
		\]
		By \eqref{eq:control-clock}, $T\le T_*$. Moreover, the paths $h$
		have a common Lipschitz constant
		\[
		L=\frac{\max_{x,y\in K}d_W(x,y)}{\tau}
		\max_{z\in S_W(K)}\sigma(z)^{-1}.
		\]
		Let
		\[
		\omega(\delta)=
		\sup_{\substack{z,z'\in J\\|z-z'|\le\delta}}
		\bigl|\sigma(z)^{-2}-\sigma(z')^{-2}\bigr|.
		\]
		Then $\omega(\delta)\to0$ as $\delta\downarrow0$.
		Also $S_W^{-1}$ is Lipschitz on $J$; denote a Lipschitz constant
		by $C_S$. Choose $\delta>0$ such that
		\begin{equation}\label{eq:tube-radius}
			\delta<\operatorname{dist}(S_W(K),\partial J),\qquad
			C_S\left(\delta+\frac{LT_*}{m}\omega(\delta)\right)<r.
		\end{equation}
		This choice depends only on $W,K,\tau,r$.
		
		For $t>0$, consider the event
		\begin{equation}\label{eq:scale-tube}
			E_t(h,\delta)=
			\left\{\sup_{0\le u\le T_*}
			|S_W(x)+\widetilde B_{tu}-h(u)|<\delta\right\}.
		\end{equation}
		On this event, $B_{tu}\in J$ for $u\le T_*$, and hence
		\[
		\frac{\phi(tT_*)}{t}
		=\int_0^{T_*}\sigma(B_{tu})^{-2}\dd u
		\ge mT_*=\tau.
		\]
		Consequently $U:=\phi^{-1}(\tau t)/t\le T_*$, and
		\[
		\int_0^U\sigma(B_{tu})^{-2}\dd u
		=\tau=\int_0^T\sigma(h(u))^{-2}\dd u.
		\]
		Subtracting these identities yields
		\begin{align*}
			m|U-T|
			&\le\left|\int_T^U\sigma(h(u))^{-2}\dd u\right|\\
			&=\left|\int_0^U
			\bigl(\sigma(h(u))^{-2}-\sigma(B_{tu})^{-2}\bigr)\dd u\right|
			\le T_*\omega(\delta).
		\end{align*}
		Using $h(T)=S_W(y)$ and the Lipschitz bound for $h$, we obtain
		\[
		|B_{tU}-S_W(y)|
		\le |B_{tU}-h(U)|+|h(U)-h(T)|
		<\delta+\frac{LT_*}{m}\omega(\delta).
		\]
		It follows from \eqref{eq:tube-radius} that
		\begin{equation}\label{eq:event-inclusion}
			E_t(h,\delta)\subset
			\bigl\{|X^W(\tau t)-y|<r\bigr\},
			\qquad x,y\in K,\quad t>0.
		\end{equation}
		
		Finally, the Cameron--Martin formula and Brownian scaling give
		\begin{equation}\label{eq:CM-tube}
			\PB(E_t(h,\delta))
			\ge \exp\!\left(-\frac1{2t}
			\int_0^{T_*}|h'(u)|^2\dd u\right)b_t,
			\qquad
			b_t=\PB\!\left(\sup_{u\le T_*}|\widetilde B_{tu}|<\delta\right).
		\end{equation}
		
		Moreover,
		\[
		b_t=\PB\!\left(\sqrt t\sup_{u\le T_*}|\widetilde B_u|<\delta\right)
		\longrightarrow1.
		\]
		Combining \eqref{eq:control-energy}, \eqref{eq:event-inclusion}
		and \eqref{eq:CM-tube}, we conclude that
		\begin{equation}\label{eq:endpoint-probability-lower}
			P_x^W\bigl(|X^W(\tau t)-y|<r\bigr)
			\ge b_t\exp\!\left(-\frac{d_W(x,y)^2}{2\tau t}\right),
			\qquad x,y\in K.
		\end{equation}
		Since $b_t$ is independent of $x,y$ and $t\log b_t\to0$,
		taking logarithms and then the infimum over $x,y\in K$ proves
		\eqref{eq:endpoint}.
	\end{proof}
	
	\begin{proposition}\label{prop:quenched-lower}
		For every compact interval $K$,
		\begin{equation}\label{eq:quenched-lower}
			\liminf_{t\downarrow0}\inf_{x,y\in K}
			\left(t\log q^W(t,x,y)+\frac{d_W(x,y)^2}{2}\right)\ge0.
		\end{equation}
	\end{proposition}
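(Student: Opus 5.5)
We combine the endpoint probability bound of Lemma~\ref{lem:endpoint} with the local Gaussian lower bound of Lemma~\ref{lem:coarse}, using a short final-time decomposition as in \cite{CFZ}. Fix $\varepsilon\in(0,1)$ and put $\tau=1-\varepsilon$. By the Chapman--Kolmogorov identity, for $x,y\in K$ and any $r\in(0,1]$,
\[
q^W(t,x,y)=\int_\R p^W(\tau t,x,z)\,q^W(\varepsilon t,z,y)\,\mu_W(\dd z)
\ge P_x^W\bigl(|X^W(\tau t)-y|<r\bigr)\inf_{|z-y|<r}q^W(\varepsilon t,z,y).
\]
Both $z$ and $y$ lie in $K^1$. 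Apply Lemma~\ref{lem:coarse} on $K^1$, with constants $c,C,t_0$ depending on $a,\rho$ on $K^3$. For $\varepsilon t\le t_0$ this gives
\[
\inf_{|z-y|<r}q^W(\varepsilon t,z,y)\ge c(\varepsilon t)^{-1/2}\exp\bigl(-Cr^2/(\varepsilon t)\bigr).
\]

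Taking $t\log$ of this factor, we get $t\log c-\tfrac t2\log(\varepsilon t)-Cr^2/\varepsilon$. As $t\downarrow0$ this tends to $-Cr^2/\varepsilon$, uniformly in $x,y\in K$. For the first factor, Lemma~\ref{lem:endpoint} with this $\tau$ and $r$ gives
\[
\liminf_{t\downarrow0}\inf_{x,y\in K}\Bigl[t\log P_x^W(|X^W(\tau t)-y|<r)+\frac{d_W(x,y)^2}{2(1-\varepsilon)}\Bigr]\ge0.
\]
Adding the two bounds yields
\[
\liminf_{t\downarrow0}\inf_{x,y\in K}\Bigl(t\log q^W(t,x,y)+\frac{d_W(x,y)^2}{2}\Bigr)
\ge -\sup_{x,y\in K}\Bigl(\frac{d_W(x,y)^2}{2(1-\varepsilon)}-\frac{d_W(x,y)^2}{2}\Bigr)-\frac{Cr^2}{\varepsilon}.
\]
The supremum is at most $\frac{\varepsilon}{1-\varepsilon}\cdot\frac12\max_{K^2}d_W^2$, which is finite because $\Lambda_W$ is continuous. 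We first let $r\downarrow0$ with $\varepsilon$ fixed; this is allowed because $C$ does not depend on $r$ (it is tied to $K^1$ via $r\le1$). Then we let $\varepsilon\downarrow0$, which gives \eqref{eq:quenched-lower}.

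The main point requiring care is uniformity. The constants from Lemma~\ref{lem:coarse} must not depend on $r$ or on the points $z,y$. Also, the infimum of a sum must be bounded below by the sum of the infima, which is fine because the first bound holds for all $x,y\in K$ simultaneously. Finally, the factor $t^{-1/2}$ contributes only $-\tfrac t2\log(\varepsilon t)\to0$. The order of the limits ($t\to0$, then $r\to0$, then $\varepsilon\to0$) is what makes the argument close.
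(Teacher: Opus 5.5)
Your proposal is correct and follows the paper's proof essentially step for step: you split at time $(1-\varepsilon)t$, bound the final factor with Lemma~\ref{lem:coarse} on $K^1$ (constants independent of $r,\varepsilon,x,y$), bound the first factor with the endpoint estimate, and then let $t\downarrow0$, $r\downarrow0$, $\varepsilon\downarrow0$ in that order. The only difference is cosmetic. You invoke the $\liminf$ statement of Lemma~\ref{lem:endpoint}, whereas the paper uses the explicit inequality \eqref{eq:endpoint-probability-lower} from its proof, and this changes nothing.
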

	
	\begin{proof}
		Fix $0<\varepsilon<1$ and $0<r<1$, independently of $t$.
		The quenched semigroup identity and positivity imply
		\begin{align*}
			q^W(t,x,y)
			&\ge\int_{|z-y|<r}q^W((1-\varepsilon)t,x,z)
			q^W(\varepsilon t,z,y)\dd z\\
			&\ge P_x^W(|X^W((1-\varepsilon)t)-y|<r)
			\inf_{|z-y|<r}q^W(\varepsilon t,z,y).
		\end{align*}
		The last infimum is taken over $z\in K^1$ when $y\in K$.
		Apply Lemma~\ref{lem:coarse} on the fixed interval $K^1$. Its
		constants $c,C,t_0$ can thus be chosen independently of
		$r,\varepsilon,x,y$. For $\varepsilon t\le t_0$,
		\[
		\inf_{|z-y|<r}q^W(\varepsilon t,z,y)
		\ge c(\varepsilon t)^{-1/2}
		\exp\!\left(-\frac{Cr^2}{\varepsilon t}\right).
		\]
		For the first factor use \eqref{eq:endpoint-probability-lower} with
		$\tau=1-\varepsilon$. It gives a number $b_t>0$, depending on
		$W,K,r,\varepsilon$ but not on $x,y$, such that $b_t\to1$ and
		\[
		q^W(t,x,y)\ge c(\varepsilon t)^{-1/2}b_t
		\exp\!\left(-\frac{d_W(x,y)^2}{2(1-\varepsilon)t}
		-\frac{Cr^2}{\varepsilon t}\right).
		\]
		Then
		\begin{align*}
			t\log q^W(t,x,y)+\frac{d_W(x,y)^2}{2}
			\ge{}&t\log\!\big(c(\varepsilon t)^{-1/2}b_t\big)
			-\frac{Cr^2}{\varepsilon}\\
			&-\frac{\varepsilon d_W(x,y)^2}{2(1-\varepsilon)}.
		\end{align*}
		The logarithmic term tends to zero for fixed $r,\varepsilon$.
		The function $d_W$ is bounded on $K\times K$. Therefore
		\begin{align*}
			\liminf_{t\downarrow0}\inf_{x,y\in K}
			\left(t\log q^W(t,x,y)+\frac{d_W(x,y)^2}{2}\right)
			\ge -\frac{Cr^2}{\varepsilon}
			-\frac{\varepsilon\sup_{x,y\in K}d_W(x,y)^2}{2(1-\varepsilon)}.
		\end{align*}
		First let $r\downarrow0$ and then let $\varepsilon\downarrow0$
		to obtain \eqref{eq:quenched-lower}.
	\end{proof}
	
	\begin{proposition}\label{prop:annealed-lower}
		Under $H_1$--$H_3$, for every compact interval $K$,
		\begin{equation}\label{eq:annealed-lower}
			\liminf_{t\downarrow0}\inf_{x,y\in K}
			\bigl(t\log q(t,x,y)+J(x,y)\bigr)\ge0.
		\end{equation}
	\end{proposition}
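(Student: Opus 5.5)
The plan is to restrict the expectation $q=\Ebb[q^W]$ to a set of environments of positive probability on which the quenched lower bound from the proof of Proposition~\ref{prop:quenched-lower} holds with constants that are uniform in $W$.

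Fix $\eta>0$. By compactness of $\cL$ and continuity of $(\Lambda,x,y)\mapsto|\Lambda(y)-\Lambda(x)|^2$, I would cover $\cL$ by finitely many balls (in the metric of $C(K^2)$) of radius $\eta$ centred at $\Lambda_1,\dots,\Lambda_N\in\cL$. Since each $\Lambda_j$ lies in the support, the event
\[
A_j=\Bigl\{\|\Lambda_W-\Lambda_j\|_{C(K^2)}<\eta,\ M(K^2)\le M_0\Bigr\}
\]
has positive probability for a suitable $M_0$. The first condition has positive probability by the definition of support, and intersecting with $\{M(K^2)\le M_0\}$ keeps it positive once $M_0$ is large enough, since $M(K^2)<\infty$ almost surely. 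For $x,y\in K$, choose $\Lambda^*\in\cL$ attaining the minimum in $J(x,y)$ and a $j$ with $\|\Lambda^*-\Lambda_j\|<\eta$. Then on $A_j$,
\[
d_W(x,y)^2\le 2J(x,y)+C\eta ,
\]
where $C$ depends only on $\sup_{\cL}\|\Lambda\|_{C(K)}$. Hence
\[
q(t,x,y)\ge \min_j \Pbb(A_j)\,\inf_{W\in A_j}q^W(t,x,y).
\]

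The main obstacle is making the quenched lower bound uniform over $W\in A_j$. Lemma~\ref{lem:coarse} is already uniform, since its constants depend only on $M(K^2)\le M_0$. The bound \eqref{eq:endpoint-probability-lower}, however, uses $b_t$, whose tube radius $\delta$ depends on the modulus of continuity $\omega$ of $\sigma_W^{-2}$ on $J$. That modulus is not controlled by $M$ or by $\Lambda_W$.

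My first attempt is to rework Lemma~\ref{lem:endpoint} so that it needs no modulus of continuity. The clock $\phi$ is exactly time measured through $\Lambda_W\circ S_W^{-1}$, so I would use the tube in the intrinsic coordinate. Let $Y=\Lambda_W(X^W)$. By It\^o's formula in the smooth approximation, $Y$ is a Brownian motion with drift $\beta$ expressed through $\rho,a$. Using the bound $M\le M_0$ together with a Girsanov/time-change comparison, I expect
\[
P_x^W\bigl(|X^W(\tau t)-y|<r\bigr)\ge c\,\exp\!\left(-\frac{d_W(x,y)^2+C\eta}{2\tau t}\right)
\]
with $c$ uniform over $W\in A_j$.

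If that route fails, the fallback is to enlarge $A_j$ by adding the condition that $\sigma_W^{-2}$ be uniformly close (on $S_W(K^2)$) to a fixed continuous function. This is possible because the law of the pair of coefficients on $K^2$ is a Borel probability on a separable space, so balls around points of its support have positive mass. Uniform closeness gives a uniform modulus up to $\eta$, and the choice of $\delta$ in \eqref{eq:tube-radius} only needs an approximate modulus, so $b_t\to1$ uniformly on the enlarged $A_j$.

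Finally, substitute the uniform bound into the Chapman--Kolmogorov argument of Proposition~\ref{prop:quenched-lower} and take $t\log$. Since $t\log\Pbb(A_j)\to0$, we get
\[
\liminf_{t\downarrow0}\inf_{x,y\in K}\bigl(t\log q(t,x,y)+J(x,y)\bigr)\ge -\frac{Cr^2}{\varepsilon}-C'\varepsilon-C\eta .
\]
Letting $r\downarrow0$, then $\varepsilon\downarrow0$, then $\eta\downarrow0$ proves \eqref{eq:annealed-lower}.
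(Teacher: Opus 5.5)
Your proof goes through, but only via the fallback, and it obtains uniformity in $W$ by a different route from the paper. The paper never reopens Lemmas~\ref{lem:coarse} and~\ref{lem:endpoint}. It uses only the almost-sure statement of Proposition~\ref{prop:quenched-lower}, whose threshold time depends on $W$, and observes that the events
\[
B_{i,n}=A_i\cap\Bigl\{t\log q^W(t,x,y)+\tfrac12 d_W(x,y)^2\ge-\delta\ \text{for all } x,y\in K,\ 0<t\le n^{-1}\Bigr\}
\]
increase to $A_i$ up to a null set. By continuity of $\Pbb$ from below, some $B_{i,n_i}$ has positive probability, and on it the bound is uniform by construction. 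This Egorov-type step makes the modulus-of-continuity obstacle you identified irrelevant.

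Your fallback instead makes the constants explicit. On $\{M\le M_0\}$ intersected with a small sup-norm ball for $\rho+a$, the quantities $m$, $C_S$ and $\operatorname{dist}(S_W(K),\partial S_W(K^1))$ are uniformly controlled. So are $L$, via the uniform bound on $d_W$ over $K$, and $\omega$, up to an error proportional to the radius. Moreover $T_*$ can be replaced by the common bound $2\tau e^{M_0}$, so one tube radius and one $b_t$ serve all such $W$. This is correct and more quantitative, but it costs bookkeeping the paper avoids. Your covering of $\cL$ by $\eta$-balls is also fine and does not need the continuity of $J(x,y)$ that the paper invokes.

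A few details need fixing.
\begin{itemize}
\item Choose the ball radius $\eta'$ from $r,\varepsilon,M_0$ first, then the centre $g_j$, and only then $\delta$. The modulus $\omega_{g_j}$ depends on $g_j$, so this order avoids circularity.
\item Pick $g_j$ so that the ball meets $A_j$ with positive probability. By separability, countably many balls of radius $\eta'$ cover $C(K^2)$, so one of them works; a support point of the unconditional law need not.
\item Bound $M$ on $K^3$ rather than $K^2$, because Lemma~\ref{lem:coarse} is applied on $K^1$.
\end{itemize}

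Finally, discard your first route. Write $\Lambda_W(X^W)=F(Z)$ with $Z=B\circ\phi_W^{-1}+S_W(x)$ and $F'=1/\sigma_W$, which is merely continuous. Then $F(Z)$ is a Brownian motion plus a zero-quadratic-variation term that is in general not of bounded variation. For Brox diffusion, $\Lambda_W(X^W)=X^W$, with formal drift $-\frac12\dot W(X^W)$. There is no drift $\beta$ to feed into Girsanov, and the bound $M\le M_0$ would not control one.
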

	
	\begin{proof}
		By $H_3$, the minimum in the definition of $J$ is attained.
		Moreover, $J$ is continuous and satisfies $J(x,y)>0$ whenever $x\ne y$.
		
		Fix $\delta>0$. For each $(x_0,y_0)\in K\times K$, choose
		$\Lambda_0\in\cL$ such that
		\[
		\frac12d_{\Lambda_0}(x_0,y_0)^2=J(x_0,y_0).
		\]
		By continuity, there exist a neighborhood $V$ of $\Lambda_0$ with positive probability
		in $\cL$ and a relative open neighborhood $U$ of $(x_0,y_0)$
		in $K\times K$ such that
		\[
		\frac12d_\Lambda(x,y)^2\le J(x,y)+\delta,
		\qquad \Lambda\in V,\quad (x,y)\in U.
		\]
		
		These neighborhoods $U$ cover $K\times K$. By compactness,
		we can select a finite subcover $U_1,\ldots,U_m$.
		The corresponding events $A_1,\ldots,A_m$ all have positive
		probability and satisfy
		\begin{equation}\label{eq:favorable}
			\frac12d_W(x,y)^2\le J(x,y)+\delta,
			\qquad W\in A_i,\quad (x,y)\in U_i.
		\end{equation}
		
		For $i=1,\ldots,m$ and $n\ge1$, set
		\[
		B_{i,n}
		=
		A_i\cap
		\left\{
		W:
		t\log q^W(t,x,y)+\frac12d_W(x,y)^2\ge-\delta
		\ \text{for all }(x,y)\in K\times K,\ 0<t\le n^{-1}
		\right\}.
		\]
		These events are measurable.
		Proposition~\ref{prop:quenched-lower} implies that
		$B_{i,n}\uparrow A_i$ up to a null set. Hence, for each $i$,
		we may choose $n_i$ such that
		$\Pbb(B_{i,n_i})>0$.
		
		For $(x,y)\in U_i$ and $0<t\le n_i^{-1}$,
		the definition of $B_{i,n_i}$ and \eqref{eq:favorable} give
		\[
		\begin{aligned}
			q(t,x,y)
			&\ge \Ebb\!\left[
			\mathbf1_{B_{i,n_i}}q^W(t,x,y)
			\right]\\
			&\ge \Pbb(B_{i,n_i})
			\exp\!\left(-\frac{J(x,y)+2\delta}{t}\right).
		\end{aligned}
		\]
		Set
		\[
		c_\delta=\min_{1\le i\le m}\Pbb(B_{i,n_i})>0,
		\qquad
		t_\delta=\min_{1\le i\le m}n_i^{-1}>0.
		\]
		Since the sets $U_i$ cover $K\times K$, we obtain
		\[
		q(t,x,y)\ge c_\delta
		\exp\!\left(-\frac{J(x,y)+2\delta}{t}\right),
		\qquad x,y\in K,\quad 0<t\le t_\delta.
		\]
		Consequently,
		\[
		\liminf_{t\downarrow0}\inf_{x,y\in K}
		\bigl(t\log q(t,x,y)+J(x,y)\bigr)\ge-2\delta.
		\]
		Letting $\delta\downarrow0$ proves the assertion.
	\end{proof}
	\section{Annealed upper bound}\label{sec:upper}
	We use the Caccioppoli inequality and local iteration in
	\cite[Lemmas~4.3--4.6]{WYZ}. Fix an environment satisfying $H_1$,
	and write $P_t=P_t^W$, $\cE(u)=\cE_W(u,u)$ and
	$\|u\|_2=\|u\|_{L^2(\mu_W)}$.
	
	For a bounded, real, locally absolutely continuous function $\psi$
	such that
	\begin{equation}\label{eq:intrinsic-weight}
		2e^{a-\rho}|\psi'|^2\le h^2\quad\text{almost everywhere},
		\qquad h\ge0,
	\end{equation}
	put $P_s^\psi f=e^\psi P_s(e^{-\psi}f)$. The weighted energy
	estimate \cite[Chapter~3]{Davies} gives
	\begin{equation}\label{eq:L2}
		\|P_s^\psi f\|_2\le e^{h^2s/2}\|f\|_2.
	\end{equation}
	
	\begin{lemma}\label{lem:weighted-point}
		Let $K$ be a compact interval and $M=M(K^1)$. For $\psi,h$ as in
		\eqref{eq:intrinsic-weight}, $f\in L^2(\mu_W)$, $x\in K$ and
		$0<s\le1$,
		\begin{equation}\label{eq:point-smooth}
			|P_s^\psi f(x)|\le Cs^{-1/4}e^{M/2}e^{h^2s}\|f\|_2,
		\end{equation}
		where $C$ is independent of $W,K,h$ and $s$.
	\end{lemma}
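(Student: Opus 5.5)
We plan to combine the weighted $L^2$ bound \eqref{eq:L2} with a local $L^2\to L^\infty$ estimate coming from one-dimensional Sobolev embedding, applied to $u:=P_s^\psi f$ near $x$. Write $g:=P_s(e^{-\psi}f)$, so that $u=e^{\psi}g$. The one-dimensional argument already used in the proof of Lemma~\ref{lem:coarse} will be the template. For $z\in[x-r,x+r]\subset K^1$ with $r\le 1$, the fundamental theorem of calculus and Cauchy--Schwarz give
\[
|u(x)|\le |u(z)|+\Bigl(\int_{x\wedge z}^{x\vee z}e^{-a}\Bigr)^{1/2}\Bigl(\int_{K^1}e^{a}|u'|^2\Bigr)^{1/2}
\le |u(z)|+e^{M/2}r^{1/2}\,\mathcal E_{K^1}(u)^{1/2}.
\]
Averaging over $z$ with respect to $\mu_W$ on $[x-r,x+r]$, where $\mu_W([x-r,x+r])\ge 2re^{-M}$, bounds the first term by $(2r)^{-1/2}e^{M/2}\|u\|_2$. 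Therefore
\[
|u(x)|\le e^{M/2}\Bigl(r^{-1/2}\|u\|_2+r^{1/2}\mathcal E_{K^1}(u)^{1/2}\Bigr).
\]
Choosing $r=\sqrt s$ yields $s^{-1/4}$ times $\|u\|_2+s^{1/2}\mathcal E_{K^1}(u)^{1/2}$. It then remains to show that $\|u\|_2\le e^{h^2s/2}\|f\|_2$, which is \eqref{eq:L2}, and that $s\,\mathcal E(u)\le C e^{2h^2 s}\|f\|_2^2$, with a global weighted energy in place of the local one.

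For the energy bound we expand $u'=e^\psi(g'+\psi' g)$. By \eqref{eq:intrinsic-weight}, $e^{a}|\psi'|^2e^{2\psi}g^2\le \tfrac{h^2}{2}e^{\rho}e^{2\psi}g^2$, so
\[
\mathcal E(u)\le 2\int e^{a}e^{2\psi}|g'|^2+h^2\|u\|_2^2 .
\]
The term $sh^2\|u\|_2^2\le sh^2e^{h^2s}\|f\|_2^2\le e^{2h^2s}\|f\|_2^2$ is harmless. The main obstacle is the term $\int e^{a}e^{2\psi}|g'|^2=\mathcal E_W(g,e^{2\psi}g)-2\int e^{a}e^{2\psi}\psi' g g'$, which is the standard Davies twisted-form computation. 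We intend to use the identity $\frac{d}{d\sigma}\|e^\psi g_\sigma\|_2^2=-2\mathcal E(g_\sigma,e^{2\psi}g_\sigma)$ for $g_\sigma=P_\sigma(e^{-\psi}f)$, together with $|2e^{a}e^{2\psi}\psi' gg'|\le \tfrac12 e^{a}e^{2\psi}|g'|^2+h^2e^{\rho}e^{2\psi}g^2$ (absorbing the cross term), to get
\[
\tfrac12\int e^a e^{2\psi}|g_\sigma'|^2\le -\tfrac12\tfrac{d}{d\sigma}\|e^\psi g_\sigma\|_2^2+h^2\|e^\psi g_\sigma\|_2^2 .
\]

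Integrating over $\sigma\in[s/2,s]$ and bounding $\|e^\psi g_\sigma\|_2\le e^{h^2\sigma/2}\|f\|_2$ gives $\int_{s/2}^{s}\int e^ae^{2\psi}|g_\sigma'|^2\,d\sigma\le C(1+h^2s)e^{h^2s}\|f\|_2^2$. We then need monotonicity of the twisted energy in $\sigma$ to pass to the endpoint $\sigma=s$. This is the delicate point, since $e^{2\psi}$ breaks symmetry. As a fallback, we will apply the whole argument to $u=P^\psi_{s/2}(P^\psi_{s/2}f)$. Here spectral calculus shows that $\mathcal E(P_{s/2}v)\le s^{-1}\|v\|^2_2$ for the untwisted semigroup, which one transfers via the product rule above at the cost of the factor $e^{h^2 s/2}$. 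In either route, $(1+h^2s)e^{h^2s}\le Ce^{2h^2s}$ collects the exponential factors into $e^{h^2s}$ after taking square roots. This yields \eqref{eq:point-smooth} with a universal constant $C$.
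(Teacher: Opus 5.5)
Your local step is correct. For $r=\sqrt s\le1$ the interval $[x-r,x+r]$ lies in $K^1$, and the one-dimensional Sobolev argument gives $|u(x)|\le e^{M/2}s^{-1/4}\bigl(\|u\|_2+s^{1/2}\cE_W(u,u)^{1/2}\bigr)$ with exactly the right power of $e^{M}$. The expansion of $\cE_W(u,u)$ and the integrated Davies estimate are also correct. Writing $E_\psi(\sigma):=\int e^{a}e^{2\psi}|g_\sigma'|^2\dd z$, they give $\int_{s/2}^{s}E_\psi(\sigma)\dd\sigma\le(1+h^2s)e^{h^2s}\|f\|_2^2$.

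\emph{The gap.} The bound $s\,\cE_W(u,u)\le Ce^{2h^2s}\|f\|_2^2$ requires the endpoint value $E_\psi(s)$, not a time average. You do not prove the monotonicity that would pass from one to the other, and your fallback fails. Spectral calculus controls $\int e^{a}|(P_{s/2}v)'|^2$ by $s^{-1}\|v\|_2^2$ with $v=P_{s/2}(e^{-\psi}f)$. What you need, however, is $\int e^{a}e^{2\psi}|(P_{s/2}v)'|^2$ in terms of $\|e^{\psi}v\|_2=\|P^\psi_{s/2}f\|_2$. Pulling $e^{2\psi}$ out of the integral and putting $e^{-\psi}$ back into the norm costs a factor $e^{2\operatorname{osc}\psi}$, not $e^{h^2s/2}$. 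The product rule only handles the $\psi'g$ term, and spectral calculus is not available for $P^\psi$ itself, since it is not self-adjoint (its adjoint is $P^{-\psi}$). For the weight used in Proposition~\ref{prop:upper}, $\operatorname{osc}\psi=h\,d_W(x,y)$. The factor $e^{2\operatorname{osc}\psi}=e^{2h\,d_W(x,y)}$ therefore destroys exactly the decay $e^{-h\,d_W(x,y)}$ the lemma is meant to produce.

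\emph{How to close it.} You need only quasi-monotonicity, not exact monotonicity, and it takes a few lines. For $\sigma>0$ one has $g_\sigma\in\operatorname{Dom}(\mathcal L_W)$ and $\mathcal L_Wg_\sigma\in\cF_W$, hence $e^{2\psi}\mathcal L_Wg_\sigma\in\cF_W$. Therefore
\[
\cE_W(g_\sigma,e^{2\psi}\mathcal L_Wg_\sigma)=-\int e^{\rho}e^{2\psi}(\mathcal L_Wg_\sigma)^2\dd z,
\]
and consequently
\[
E_\psi'(\sigma)=-2\int e^{\rho}e^{2\psi}(\mathcal L_Wg_\sigma)^2\dd z-4\int e^{a}e^{2\psi}\psi'g_\sigma'\,\mathcal L_Wg_\sigma\dd z\le h^2E_\psi(\sigma),
\]
by Cauchy--Schwarz and \eqref{eq:intrinsic-weight}. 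No derivative of $a$ or of $\psi'$ is needed. Hence $E_\psi(s)\le e^{h^2s/2}E_\psi(\sigma)$ for $\sigma\in[s/2,s]$. Averaging over $\sigma$ gives $sE_\psi(s)\le2(1+h^2s)e^{3h^2s/2}\|f\|_2^2$, and your bookkeeping then yields \eqref{eq:point-smooth} with a universal $C$.

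\emph{Comparison with the paper.} With this repair your proof is a genuinely different route. The paper applies the parabolic mean-value inequality from Moser iteration \cite[Lemma~4.6]{WYZ} on the intrinsic ball $D_W(x,\ell)$, $\ell=\frac14e^{-M/2}\sqrt s$, to the nonnegative caloric function $P_r(e^{-\psi}|f|)$. There the weight enters only through $|\psi(z)-\psi(x)|\le h\ell$ on that ball and through \eqref{eq:L2} integrated over $(s-\ell^2,s+\ell^2)$, so no twisted energy estimate is ever needed. Your argument replaces that imported machinery by the embedding $H^1\subset C^{1/2}$ together with a weighted energy estimate. It is self-contained, being the twisted analogue of the proof of \eqref{eq:unweighted-smooth}, but it is tied to one dimension.
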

	\begin{proof}
		Write $M=M(K^1)$ and choose
		\[
		\ell=\frac14e^{-M/2}\sqrt{s},\qquad
		I=D_W(x,\ell),\qquad
		Q=(s-\ell^2,s+\ell^2)\times I.
		\]
		Then $Q\subset(0,\infty)\times K^1$.
		
		We use the local mean-value argument in
		\cite[Proof of Lemma~4.6]{WYZ}.
		
		The same iteration gives
		\[
		u(s,x)^2
		\le Ce^{M/2}\ell^{-3}
		\int_{s-\ell^2}^{s+\ell^2}\int_I
		u(r,z)^2\,\mu_W(\dd z)\dd r.
		\]
		Here the exponent $M/2$ follows from
		$\sum_{n\ge0}2/(3p_n)=1/2$, where $p_n=2\cdot3^n$.
		
		Apply this estimate to
		$u(r,z)=P_r(e^{-\psi}|f|)(z)$.
		Since $|\psi(z)-\psi(x)|\le h\ell$ on $I$,
		positivity and \eqref{eq:L2} yield
		\begin{align*}
			|P_s^\psi f(x)|^2
			&\le Ce^{M/2}\ell^{-3}e^{2h\ell}
			\int_{s-\ell^2}^{s+\ell^2}
			\|P_r^\psi|f|\|_2^2\,\dd r\\
			&\le Ce^{M/2}\ell^{-1}
			e^{2h\ell+h^2(s+\ell^2)}\|f\|_2^2.
		\end{align*}
		Taking square roots gives \eqref{eq:point-smooth}, since
		\[
		e^{M/4}\ell^{-1/2}=2e^{M/2}s^{-1/4},
		\qquad
		h\ell+\frac12h^2(s+\ell^2)\le h^2s+1.
		\qedhere
		\]
	\end{proof}
	\begin{proposition}\label{prop:upper}
		For each compact interval $K$ and $\eta\in(0,1)$,
		\begin{equation}\label{eq:quenched-upper}
			q^W(t,x,y)\le C_\eta t^{-1/2}e^{2M(K^1)}
			\exp\!\left(-\frac{d_W(x,y)^2}{2(1+\eta)t}\right),
			\quad x,y\in K,\quad0<t\le1,
		\end{equation}
		where $C_\eta$ is independent of $W$ and $K$.
	\end{proposition}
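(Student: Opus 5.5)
This is the standard Davies perturbation argument. We combine Lemma~\ref{lem:weighted-point} with its adjoint (dual) version through the semigroup identity, and then optimize over an exponential weight built from the intrinsic coordinate.

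\textbf{Step 1: a two-sided $L^\infty$ bound on the weighted kernel.} Since $P_t$ is $\mu_W$-symmetric, $P_s^{-\psi}$ is the $L^2(\mu_W)$-adjoint of $P_s^\psi$. Lemma~\ref{lem:weighted-point} applied with weight $-\psi$ (condition \eqref{eq:intrinsic-weight} is symmetric in $\psi$) gives a bound at $x\in K$. Duality then yields
\[
\|e^{-\psi}p^W(s,x,\cdot)\|_{L^2(\mu_W)}e^{\psi(x)}\le Cs^{-1/4}e^{M/2}e^{h^2s},
\]
and likewise with $\psi$ replaced by $-\psi$ for the point $y\in K$. Writing $p^W(t,x,y)=\int p^W(t/2,x,z)p^W(t/2,z,y)\,\mu_W(\dd z)$, we insert $e^{\psi(z)-\psi(z)}$ and apply Cauchy--Schwarz. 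This gives
\[
e^{\psi(y)-\psi(x)}p^W(t,x,y)\le C^2t^{-1/2}e^{M}e^{h^2t}.
\]
The exact powers of $e^{M}$ come from the lemma. We then multiply by $e^{\rho(y,W)}\le e^{M}$ to pass to $q^W$, which produces the factor $e^{2M(K^1)}$ claimed (up to adjusting the constant). Here $M=M(K^1)$ throughout.

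\textbf{Step 2: choice of weight.} Take
\[
\psi(z)=\lambda\bigl((\Lambda_W(z)\wedge\Lambda_W(y'))\vee\Lambda_W(x')\bigr),
\]
truncated so that $\psi$ is bounded, with $x',y'$ the ordered pair of $x,y$ and the sign of $\lambda$ chosen according to the orientation. Since $\Lambda_W'=\sqrt{e^{\rho-a}/2}$, we get $2e^{a-\rho}|\psi'|^2\le\lambda^2$, so \eqref{eq:intrinsic-weight} holds with $h=|\lambda|$. Then $\psi(y)-\psi(x)=\lambda d_W(x,y)$, and Step 1 gives
\[
q^W(t,x,y)\le Ct^{-1/2}e^{2M}\exp\bigl(-\lambda d+\lambda^2t\bigr).
\]
Optimizing with $\lambda=d/(2t)$ only yields $d^2/(4t)$. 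This is the crux: the $e^{h^2s}$ in Lemma~\ref{lem:weighted-point} is lossy compared with the sharp $e^{h^2s/2}$ of \eqref{eq:L2}.

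\textbf{Step 3: recovering the sharp constant (main obstacle).} The loss comes only from the short window of length $\ell^2$ and the local factor, not from the bulk evolution. I would therefore split unevenly. For small $\theta=\theta(\eta)$, write
\[
p^W(t,x,y)=\int p^W(\theta t,x,z)\,\bigl(P_{(1-2\theta)t}\bigr)\,p^W(\theta t,\cdot,y)\,\mu_W(\dd z)
\]
in weighted form, as $\langle e^{-\psi}p(\theta t,x,\cdot),P^{\psi}_{(1-2\theta)t}[e^{\psi}p(\theta t,\cdot,y)]\rangle$ up to the factors $e^{\pm\psi}$. The middle factor uses \eqref{eq:L2}, contributing $e^{h^2(1-2\theta)t/2}$. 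The two ends use the dual of Lemma~\ref{lem:weighted-point} at time $\theta t\le1$, contributing $(\theta t)^{-1/4}e^{M/2}e^{h^2\theta t}$ each. The total exponent is
\[
-hd+\tfrac{h^2t}{2}(1+2\theta),
\]
whose optimum is $-d^2/(2(1+2\theta)t)$. Choosing $2\theta=\eta$ gives the claim, with $C_\eta\sim\theta^{-1/2}$ independent of $W$ and $K$.

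The delicate point is checking that the constant $C$ in Lemma~\ref{lem:weighted-point} is indeed uniform in $h$. The lemma's proof absorbs $e^{2h\ell}$ via $h\ell+\frac12h^2(s+\ell^2)\le h^2s+1$, and that absorption should remain valid at time $\theta t$.
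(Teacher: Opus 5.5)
Your Step 3 is the paper's proof: the split $t=s+(t-2s)+s$ with $s=\eta t/2$, Lemma~\ref{lem:weighted-point} (via duality) at both endpoints, the sharp bound \eqref{eq:L2} on the middle factor, a bounded weight built from $\Lambda_W$ (your clamp plays the role of the paper's $h\min\{d_W(z,y),d_W(x,y)\}$), and the choice $h=d_W(x,y)/((1+\eta)t)$. Two cosmetic remarks: as you set it up, Cauchy--Schwarz bounds $e^{\psi(x)-\psi(y)}p^W(t,x,y)$ rather than $e^{\psi(y)-\psi(x)}p^W(t,x,y)$, which does not matter since $-\psi$ is equally admissible; and the uniformity of $C$ in $h$ that you flag is already part of the statement of Lemma~\ref{lem:weighted-point}, which applies at time $\eta t/2\le 1$.
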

	\begin{proof}
		Let $s=\eta t/2$ and write $M=M(K^1)$. For fixed $y\in K$, set
		\[
		g_y(z)=e^{\psi(z)-\psi(y)}p^W(s,z,y).
		\]
		By symmetry of $p^W$,
		\[
		\int_\R g_y(z)f(z)\,\mu_W(\dd z)=P_s^{-\psi}f(y).
		\]
		Thus, applying \eqref{eq:point-smooth} with $-\psi$ and using
		$L^2$ duality, we obtain
		\[
		\|g_y\|_2\le Cs^{-1/4}e^{M/2}e^{h^2s}.
		\]
		
		The semigroup identity gives
		\[
		e^{\psi(x)-\psi(y)}p^W(t,x,y)
		=\bigl(P_s^\psi P_{t-2s}^\psi g_y\bigr)(x).
		\]
		Apply \eqref{eq:point-smooth} at $x$, and then use
		\eqref{eq:L2} to estimate $\|P_{t-2s}^\psi g_y\|_2$.
		Together with the bound for $\|g_y\|_2$, this yields
		\begin{align*}
			e^{\psi(x)-\psi(y)}p^W(t,x,y)
			&\le Cs^{-1/4}e^{M/2}e^{h^2s}
			\|P_{t-2s}^\psi g_y\|_2\\
			&\le Cs^{-1/4}e^{M/2}e^{h^2s}
			e^{h^2(t-2s)/2}\|g_y\|_2\\
			&\le Cs^{-1/2}e^M
			\exp\!\left(2h^2s+\frac{h^2(t-2s)}2\right)\\
			&=C_\eta t^{-1/2}e^{M(K^1)}
			\exp\!\left(\frac{(1+\eta)h^2t}{2}\right),
		\end{align*}
		where the last equality follows from $s=\eta t/2$.
		For $h\ge0$, choose the bounded weight
		\[
		\psi(z)=h\min\{d_W(z,y),d_W(x,y)\}.
		\]
		It satisfies \eqref{eq:intrinsic-weight} and
		$\psi(x)-\psi(y)=h\,d_W(x,y)$. Multiplying the preceding
		bound by $e^{\rho(y,W)}\le e^{M(K^1)}$ yields
		\[
		q^W(t,x,y)\le C_\eta t^{-1/2}e^{2M(K^1)}
		\exp\!\left(\frac{(1+\eta)h^2t}{2}-h\,d_W(x,y)\right).
		\]
		Taking $h=d_W(x,y)/((1+\eta)t)$ proves the assertion.
	\end{proof}
	
	\begin{proof}[Proof of Theorems~\ref{thm:quenched} and~\ref{thm:annealed}]
		Fix a compact interval $K$.
		
		We first prove Theorem~\ref{thm:quenched}. Fix an environment $W$
		satisfying $H_1$. For each $\eta\in(0,1)$,
		\eqref{eq:quenched-upper} gives
		\[
		t\log q^W(t,x,y)+\frac12d_W(x,y)^2
		\le
		t\log\!\left(C_\eta t^{-1/2}e^{2M(K^1)}\right)
		+\frac{\eta}{2(1+\eta)}d_W(x,y)^2.
		\]
		For fixed $W$ and $\eta$, the first term on the right tends to
		zero as $t\downarrow0$. Since $d_W$ is bounded on $K\times K$,
		we obtain
		\[
		\limsup_{t\downarrow0}\sup_{x,y\in K}
		\left(t\log q^W(t,x,y)+\frac12d_W(x,y)^2\right)
		\le
		\frac{\eta}{2(1+\eta)}
		\sup_{x,y\in K}d_W(x,y)^2.
		\]
		Letting $\eta\downarrow0$ and combining this with
		Proposition~\ref{prop:quenched-lower} proves
		\eqref{intro:quenched}, uniformly on $K\times K$.
		
		We next prove Theorem~\ref{thm:annealed}.
		Finiteness, positivity and continuity of $q$ follow from
		Lemmas~\ref{lem:kernel} and~\ref{lem:coarse}.
		Almost surely, $\Lambda_W\in\cL$, and hence
		\[
		\frac12d_W(x,y)^2\ge J(x,y),
		\qquad x,y\in\R.
		\]
		Taking expectation in \eqref{eq:quenched-upper} therefore yields
		\begin{equation}\label{eq:annealed-upper}
			q(t,x,y)\le
			C_\eta t^{-1/2}\Ebb\!\left[e^{2M(K^1)}\right]
			\exp\!\left(-\frac{J(x,y)}{(1+\eta)t}\right),
			\qquad x,y\in K,\quad 0<t\le1.
		\end{equation}
		The expectation is finite by $H_2$. Taking logarithms and
		multiplying by $t$, we obtain
		\[
		t\log q(t,x,y)+J(x,y)
		\le
		t\log\!\left(
		C_\eta t^{-1/2}\Ebb\!\left[e^{2M(K^1)}\right]
		\right)
		+\frac{\eta}{1+\eta}J(x,y).
		\]
		The first term on the right tends to zero as $t\downarrow0$.
		Since $J$ is bounded on $K\times K$, it follows that
		\[
		\limsup_{t\downarrow0}\sup_{x,y\in K}
		\bigl(t\log q(t,x,y)+J(x,y)\bigr)
		\le
		\frac{\eta}{1+\eta}\sup_{x,y\in K}J(x,y).
		\]
		Letting $\eta\downarrow0$ and combining this with
		Proposition~\ref{prop:annealed-lower} proves
		\eqref{intro:annealed}, uniformly on $K\times K$.
	\end{proof}
	\section{Appendix}
	\begin{lemma}\label{lem:kernel}
		Under $H_1$, the transition density $p^W$ has a symmetric,
		nonnegative, jointly continuous version on $(0,\infty)\times\R^2$,
		which we use throughout. The density $q^W$ is jointly measurable
		in $(W,t,x,y)$. For $s>0$ and $f\in L^2(\mu_W)$,
		\begin{equation}\label{eq:unweighted-smooth}
			|P_sf(x)|\le C s^{-1/4}
			e^{M([x-\sqrt s,x+\sqrt s])/2}\|f\|_2.
		\end{equation}
		Under $H_2$, $q$ is finite and jointly continuous for positive times.
	\end{lemma}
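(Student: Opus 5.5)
We prove the assertions of Lemma~\ref{lem:kernel} in order: existence and regularity of $p^W$, then the smoothing bound \eqref{eq:unweighted-smooth}, then measurability, and finally the properties of $q$ under $H_2$.

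\textbf{Existence and continuity of $p^W$.} Fix $W$ satisfying $H_1$. The Dirichlet form $(\cE_W,\cF_W)$ is strongly local and regular on $L^2(\mu_W)$, with $\mu_W$ and $e^{a}$ locally bounded above and below. We first establish the local Nash-type bound \eqref{eq:unweighted-smooth} by the Moser iteration of \cite[Lemma~4.6]{WYZ}; the unweighted case $\psi=0$, $h=0$ of the argument in Lemma~\ref{lem:weighted-point} suffices. Concretely, on a parabolic cylinder of size $\ell\asymp e^{-M/2}\sqrt s$ around $(s,x)$, the mean-value inequality bounds $u(s,x)^2$ by $Ce^{M/2}\ell^{-3}\int\!\!\int u^2\,\dd\mu_W\,\dd r$. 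Combined with $\|P_rf\|_2\le\|f\|_2$, this gives $|P_sf(x)|\le Cs^{-1/4}e^{M/2}\|f\|_2$, where $M$ is taken over $[x-\sqrt s,x+\sqrt s]$ because the cylinder lies in that interval. Hence $f\mapsto P_sf(x)$ is a bounded functional on $L^2(\mu_W)$. By Riesz representation there is a kernel $p^W(s,x,\cdot)\in L^2(\mu_W)$. Setting $p^W(t,x,y)=\int p^W(t/2,x,z)p^W(t/2,y,z)\,\mu_W(\dd z)$ yields a symmetric version. Nonnegativity follows from the Markov property.

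Joint continuity is the main technical point. Continuity in $x$ for fixed $t$ follows from the energy estimate used in Lemma~\ref{lem:coarse}: $\cE_W(P_tf)\le t^{-1}\|f\|_2^2$, together with the one-dimensional Sobolev bound $|g(x)-g(y)|^2\le \int_{x}^{y}e^{-a}\,\cE_W(g)$. This makes $z\mapsto p^W(t,x,z)$ locally $1/2$-Hölder, uniformly for $x$ in compacts and $t$ bounded below. Continuity in $t$ follows from strong continuity of the semigroup in $L^2$ combined with the Chapman--Kolmogorov representation. Joint continuity then comes from an equicontinuity argument.

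\textbf{Measurability.} Joint measurability of $q^W$ in $(W,t,x,y)$ follows because, by the It\^o--McKean construction \eqref{eq:construction}, $P_x^W(X^W(t)\in D)$ is measurable in $(W,x,t)$. The density is then a limit of the difference quotients
\[
\mu_W(B(y,\varepsilon))^{-1}P_x^W\bigl(X^W(t)\in B(y,\varepsilon)\bigr),
\]
which exists everywhere by continuity, and a pointwise limit of measurable functions is measurable.

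\textbf{Properties of $q$ under $H_2$.} Finiteness comes from the diagonal bound $p^W(t,x,y)\le Ct^{-1/2}e^{M}$ on compacts, obtained by applying \eqref{eq:unweighted-smooth} twice. Combined with $e^{\rho(y)}\le e^{M}$, this gives $q^W\le Ct^{-1/2}e^{2M}$. By $H_2$ this envelope is integrable, locally uniformly in $(t,x,y)$ for $t$ bounded below. Dominated convergence together with continuity of $q^W$ then yields continuity of $q$.
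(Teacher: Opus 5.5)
Your proposal is correct in substance but takes a different route from the paper in two places.

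\textbf{The smoothing bound.} For \eqref{eq:unweighted-smooth} the paper does not use Moser iteration. It takes $u=P_sf\in\cF_W$ and applies the fundamental theorem of calculus on $I=[x-r,x+r]$, averaging over $z\in I$ to get $|u(x)|^2\le(2r)^{-1}\int_I u^2\,\dd z+2\int_I|uu'|\,\dd z$. Both terms are bounded by Cauchy--Schwarz with the weights $e^{\rho}$ and $e^{a}$, which produces the factor $e^{M(I)}$. The spectral bounds $\|P_sf\|_2\le\|f\|_2$ and $\cE_W(P_sf,P_sf)\le(2s)^{-1}\|f\|_2^2$ then finish the estimate with $r=\sqrt s$. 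This is elementary and valid for every $s>0$, and your own continuity step already uses exactly these ingredients.

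Your route through the mean-value inequality of \cite{WYZ} also works. Your cylinder does lie in $[x-\sqrt s,x+\sqrt s]$, since $2^{-1/2}e^{-M/2}\sqrt s>\ell$. In the intrinsic coordinate the operator is uniformly elliptic with weight ratio $e^{M}$, so the inequality is scale-free. Note, however, that Lemma~\ref{lem:weighted-point} as stated covers only $0<s\le1$ with $M=M(K^1)$. Invoking ``the unweighted case'' for all $s>0$ with the adaptive interval therefore needs that scale-free form of the black box.

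\textbf{Existence and continuity of $p^W$.} The paper simply cites classical one-dimensional diffusion theory (Pitman--Yor). You instead construct the kernel by Riesz representation and prove joint continuity from the energy/Sobolev H\"older bound and strong continuity of the semigroup. This is more self-contained, and the argument goes through.

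\textbf{Measurability and the $H_2$ part.} These match the paper. The paper uses a fixed $s_0<\min(u/2,1)$ and $L^2$-contraction to keep the envelope at $e^{2M(K^1)}$. Your time-$t/2$ envelope on $K^{\sqrt{t/2}}$ is also fine, because $H_2$ holds for every $R$.

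\textbf{The missing step.} You never show that your Riesz kernel is the transition density of the \emph{process} $X^W$ for \emph{every} starting point. Your kernel represents the continuous version of the $L^2$ semigroup. The identification of $E^W_x[f(X^W(t))]$ with that semigroup from \cite{WYZ} holds only for $\mu_W$-a.e.\ $x$. Both your measurability step (difference quotients of $P^W_x(X^W(t)\in B(y,\varepsilon))$) and later uses need it for every $x$, for instance the decomposition in Proposition~\ref{prop:quenched-lower}.

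This is easy to close. Use \eqref{eq:construction} with a common Brownian path. As $x'\to x$, the clock $\phi_W$ converges uniformly on compacts, so its inverse at time $t$ converges. Hence $X^W(t)$ started at $x'$ converges almost surely to $X^W(t)$ started at $x$. It follows that $x\mapsto E^W_x f(X^W(t))$ is continuous for $f\in C_c(\R)$. It agrees $\mu_W$-a.e.\ with the continuous function $\int p^W(t,x,y)f(y)\,\mu_W(\dd y)$, hence everywhere, which identifies the laws. Alternatively, use quasi-continuity of $P_tf$ together with the fact that points have positive capacity in one dimension. The paper sidesteps this step by citing the classical result.
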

	\begin{proof}
		Fix an environment $W$ satisfying $H_1$. The existence,
		symmetry, and joint continuity of $p^W$ follow from the
		classical theory of regular one-dimensional diffusions; see
		\cite[Section~2.1, equations~(1)--(2)]{PitmanYor}.
		Consequently, $q^W(t,x,y)=p^W(t,x,y)e^{\rho(y,W)}$ is jointly
		continuous in $(t,x,y)$.
		
		The scale--time construction is jointly measurable in the
		environment, time, starting point, and Brownian path.
		Therefore, the identity
		\[
		q^W(t,x,y)
		=\lim_{n\to\infty}\frac n2
		P_x^W\bigl(|X^W(t)-y|<1/n\bigr)
		\]
		shows that $q^W$ is jointly measurable in $(W,t,x,y)$.
		
		It remains to prove \eqref{eq:unweighted-smooth}.
		Fix $s>0$ and let $f\in L^2(\mu_W)$. Write $\|\cdot\|_2$
		for the norm in $L^2(\mu_W)$. By the standard semigroup
		estimates \cite[Section~1.3]{FOT}, $P_s^Wf\in\cF_W$ and
		
		\begin{equation}\label{1}
			\|P_s^W f\|_2 \le \|f\|_2,
			\qquad
			\mathcal{E}_W(P_s^W f,P_s^W f)^{1/2}
			\le (2s)^{-1/2}\|f\|_2.
		\end{equation}
		
		In particular, $P_s^Wf$ is locally absolutely continuous.
		
		Let $u=P_s^Wf$. Fix $x\in\R$ and $r>0$, and put
		$I=[x-r,x+r]$. For every $z\in I$, the fundamental theorem
		of calculus gives
		\[
		|u(x)|^2
		\le |u(z)|^2+2\int_I|u(y)u'(y)|\dd y.
		\]
		Integrating this inequality with respect to $z$ over $I$
		and dividing by $2r$, we obtain
		\[
		|u(x)|^2
		\le \frac{1}{2r}\int_I|u(z)|^2\dd z
		+2\int_I|u(z)u'(z)|\dd z.
		\]
		
		To bound the first integral, use
		$\mu_W(\dd z)=e^{\rho(z,W)}\dd z$:
		\[
		\int_I|u(z)|^2\dd z
		\le e^{\sup_I|\rho(\cdot,W)|}\|u\|_2^2
		\le e^{M(I)}\|u\|_2^2.
		\]
		For the second integral, Cauchy--Schwarz gives
		\[
		\begin{aligned}
			\int_I|u(z)u'(z)|\dd z
			&=\int_I e^{-(a(z,W)+\rho(z,W))/2}
			|e^{\rho(z,W)/2}u(z)|
			|e^{a(z,W)/2}u'(z)|\dd z\\
			&\le e^{M(I)/2}
			\left(\int_I e^{\rho(z,W)}|u(z)|^2\dd z\right)^{1/2}
			\left(\int_I e^{a(z,W)}|u'(z)|^2\dd z\right)^{1/2}\\
			&\le e^{M(I)/2}\|u\|_2\cE_W(u,u)^{1/2}.
		\end{aligned}
		\]
		Combining these estimates and using $M(I)\ge0$ yields
		\[
		|P_s^Wf(x)|^2
		\le e^{M(I)}
		\left(
		\frac{\|u\|_2^2}{2r}
		+2\|u\|_2\cE_W(u,u)^{1/2}
		\right).
		\]
		The semigroup estimates stated above now imply
		\[
		|P_s^Wf(x)|^2
		\le Ce^{M(I)}
		\bigl(r^{-1}+s^{-1/2}\bigr)\|f\|_2^2.
		\]
		Taking $r=\sqrt{s}$ and then taking square roots gives
		\[
		|P_s^Wf(x)|
		\le Cs^{-1/4}
		e^{M([x-\sqrt{s},x+\sqrt{s}])/2}\|f\|_2,
		\]
		which proves \eqref{eq:unweighted-smooth}.
		
		Consequently, for fixed $s>0$ and
		$x\in\R$,
		\begin{equation}\label{eq:row-bound}
			\begin{aligned}
				\|p^W(s,x,\cdot)\|_2
				&=\sup_{\substack{f\in L^2(\mu_W)\\ \|f\|_2\le1}}
				|P_s^Wf(x)|\\
				&\le Cs^{-1/4}
				e^{M([x-\sqrt{s},x+\sqrt{s}])/2}.
			\end{aligned}
		\end{equation}
		
		Fix $0<u<v<\infty$ and a compact interval $K$, and choose
		$0<s_0<\min(u/2,1)$. By symmetry, the semigroup property,
		and $L^2$ contraction,
		\[
		\begin{aligned}
			q^W(t,x,y)
			&\le e^{\rho(y,W)}
			\|p^W(s_0,x,\cdot)\|_2
			\|p^W(s_0,y,\cdot)\|_2\\
			&\le C_u e^{2M(K^1)},
			\qquad t\in[u,v],\quad x,y\in K.
		\end{aligned}
		\]
		Under $H_2$, the last bound is integrable in $W$.
		Dominated convergence therefore proves that
		$q=\Ebb[q^W]$ is finite and jointly continuous for positive
		times.
	\end{proof}


\begin{thebibliography}{99}
		
		\bibitem{Aronson}
		D.~G. Aronson,
		Non-negative solutions of linear parabolic equations,
		\emph{Ann. Scuola Norm. Sup. Pisa Cl. Sci.} (3) \textbf{22} (1968),
		no.~4, 607--694.
		\href{https://www.numdam.org/item/ASNSP_1968_3_22_4_607_0/}{Full text}.
		
		\bibitem{Brox}
		T. Brox,
		A one-dimensional diffusion process in a Wiener medium,
		\emph{Ann. Probab.} \textbf{14} (1986), no.~4, 1206--1218.
		
		\bibitem{CW}
		X. Chen and J. Wang,
		Quenched and annealed heat kernel estimates for Brox's diffusion,
		\emph{Probab. Theory Related Fields} (2026).
		\href{https://doi.org/10.1007/s00440-026-01518-5}{doi:10.1007/s00440-026-01518-5}.
		
		\bibitem{CFZ}
		Z.-Q. Chen, S. Fang and T. Zhang,
		Small time asymptotics for Brownian motion with singular drift,
		\emph{Proc. Amer. Math. Soc.} \textbf{147} (2019), no.~8, 3567--3578.
		\href{https://arxiv.org/abs/1808.02326}{arXiv:1808.02326}.
		
		\bibitem{Davies}
		E.~B. Davies,
		\emph{Heat Kernels and Spectral Theory},
		Cambridge Tracts in Mathematics 92,
		Cambridge University Press, Cambridge, 1989.
		
		\bibitem{FOT}
		M. Fukushima, Y. Oshima and M. Takeda,
		\emph{Dirichlet Forms and Symmetric Markov Processes},
		2nd revised and extended ed., De Gruyter, Berlin, 2011.
		
		\bibitem{HLM}
		Y. Hu, K. L\^e and L. Mytnik,
		Stochastic differential equation for Brox diffusion,
		\emph{Stochastic Process. Appl.} \textbf{127} (2017), no.~7, 2281--2315.
		\href{https://arxiv.org/abs/1506.02280}{arXiv:1506.02280}.
		
		\bibitem{HSY}
		Y. Hu, Z. Shi and M. Yor,
		Rates of convergence of diffusions with drifted Brownian potentials,
		\emph{Trans. Amer. Math. Soc.} \textbf{351} (1999), no.~10, 3915--3934.
		
		\bibitem{KT}
		K. Kawazu and H. Tanaka,
		A diffusion process in a Brownian environment with drift,
		\emph{J. Math. Soc. Japan} \textbf{49} (1997), no.~2, 189--211.
		
		\bibitem{Lejay}
		A. Lejay,
		Homogenization of divergence-form operators with lower-order terms
		in random media,
		\emph{Probab. Theory Related Fields} \textbf{120} (2001), no.~2,
		255--276.
		
		\bibitem{Norris}
		J.~R. Norris,
		Heat kernel asymptotics and the distance function in Lipschitz
		Riemannian manifolds,
		\emph{Acta Math.} \textbf{179} (1997), no.~1, 79--103.
		
		\bibitem{Rhodes}
		R. Rhodes,
		Diffusion in a locally stationary random environment,
		\emph{Probab. Theory Related Fields} \textbf{143} (2009), no.~3--4,
		545--568.
		
		\bibitem{Sinai}
		Ya.~G. Sinai,
		The limit behavior of a one-dimensional random walk in a random
		environment,
		\emph{Theory Probab. Appl.} \textbf{27} (1982), no.~2, 256--268.
		
		\bibitem{Sturm}
		K.-T. Sturm,
		Analysis on local Dirichlet spaces II. Upper Gaussian estimates for
		the fundamental solutions of parabolic equations,
		\emph{Osaka J. Math.} \textbf{32} (1995), no.~2, 275--312.
		
		\bibitem{Varadhan}
		S.~R.~S. Varadhan,
		On the behavior of the fundamental solution of the heat equation
		with variable coefficients,
		\emph{Comm. Pure Appl. Math.} \textbf{20} (1967), 431--455.
		
		\bibitem{Varadhan2}
		S.~R.~S. Varadhan,
		Diffusion processes in a small time interval,
		\emph{Comm. Pure Appl. Math.} \textbf{20} (1967), 659--685.
		
		\bibitem{WYZ}
		Y. Wang, S. Yang and T. Zhang,
		Small-time annealed large deviations principle for one-dimensional
		diffusions in a random environment,
		arXiv preprint (2026).
		\href{https://arxiv.org/abs/2608.26834}{arXiv:2608.26834}.
		
		\bibitem{PitmanYor}
		J. Pitman and M. Yor,
		Decomposition at the maximum for excursions and bridges of
		one-dimensional diffusions,
		in N. Ikeda, S. Watanabe, M. Fukushima and H. Kunita (eds.),
		\emph{It\^o's Stochastic Calculus and Probability Theory},
		Springer, 1996, pp.~293--310.
		\href{https://www.stat.berkeley.edu/~pitman/agreement.pdf}
		{Authors' reprint}.
		
	\end{thebibliography}
\end{document}